\documentclass{article}
\usepackage{graphicx} 

\usepackage[english]{babel}
\usepackage[utf8]{inputenc}
\usepackage{mathtools}
\usepackage{rotating}
\usepackage{caption}

\usepackage{amsfonts}
\usepackage{amsmath}
\usepackage{amssymb}
\usepackage{booktabs}
\usepackage{multicol}
\usepackage{enumitem}
\usepackage{amsthm}
\usepackage{float}
\usepackage[margin=1.5in]{geometry}
\usepackage{hyperref}

\setlength{\parindent}{0em}
\setlength{\parskip}{1em}

\numberwithin{figure}{section}
\numberwithin{equation}{section}
\numberwithin{table}{section}
\newtheorem{theorem}{Theorem}[section]
\newtheorem{prop}[theorem]{Proposition}

\newtheorem{defn}[theorem]{Definition}

\newtheorem{example}[theorem]{Example}  
\newtheorem{cod}[theorem]{Condition}
\newtheorem{rmk}[theorem]{Remark}

\restylefloat{table}

\title{Greedy $\beta$-expansions for families of Salem numbers}
\author{Kevin G. Hare\footnote{University of Waterloo, Department of Pure Mathematics. Research of K. G. Hare is supported, in part, by NSERC Grant 2019-03930.} \\ email \href{mailto:kghare@uwaterloo.ca}{kghare@uwaterloo.ca} 
   \and Liam Orovec\footnote{University of Waterloo, Department of Pure Mathematics. Research of L. Orovec is supported, in part, by NSERC Grant 2019-03930.} \\ email \href{mailto:lorovec@uwaterloo.ca}{lorovec@uwaterloo.ca} }
\date{\today}

\begin{document}

\maketitle

\begin{abstract}
    We give criteria for finding the greedy $\beta$-expansion for $1$ for families of Salem numbers that approach a given Pisot number. We show that these expansions are related to the greedy expansion under the Pisot base. This expands on the work of Hare and Tweedle.
\end{abstract}

\section{Introduction}

Mathematicians have studied representations of numbers in bases other than 10 for hundreds of years; the study of non-integer bases, on the other hand, is relatively recent. It was first written about by Rényi \cite{renyi} in 1957. In the literature such objects are commonly referred to as $\beta$-expansions. Here we give a formal definition.

\begin{defn}
    Let $\beta \in (1,2)$. Consider the expansion
    \begin{equation}
        x=\sum_{j=1}^{\infty}a_j\beta^{-j}
        \label{expansion def}
    \end{equation}
    where $a_j\in\{0,1\}$. Then $a_1a_2\dots$ is a \textbf{$\beta$-expansion} for $x$. If there exists $k\in\mathbb{N}$ with $a_k=1$ such that $a_m=0$ for all $m\geq k$ then we say that the representation is \textbf{finite} and write it as $a_1a_2\dots a_k$.
\end{defn}

Unlike the case where the base is an integer, it is common to have multiple representations for the same real number. In such a case, we can define an ordering on all such representations.

\begin{defn}
    Let $\mathbf{a}=a_1 a_2\dots$ and $\mathbf{b}=b_1 b_2\dots$ be two different $\beta$-expansions for a real number $x$. Then we say that $\mathbf{a}$ is \textbf{lexicographically less} than $\mathbf{b}$ if there exists $k\in\mathbb{N}$ such that $a_k<b_k$ and $a_i=b_i$ for all $i<k$. If this is the case, we write $\mathbf{a}<_{lex}\mathbf{b}$. When comparing a finite expansion, we use the equivalent infinite expansion by appending an infinite number of zeros to the end of the sequence.
\end{defn}

\begin{example}
    Consider the expansions of 1 under base $\phi\approx 1.61803$, the Golden Ratio and root of $x^2-x-1$. It is known that the finite representations of $1$ in base $\phi$ can be written as $1(01)^n1$ for each $n\in\mathbb{Z}_{\geq 0}$. The infinite expansion $1(01)^{\omega}$ is also valid. It is easy to see from the above definition that we have $11 >_{lex} 1011 >_{lex} 1(01)^21 >_{lex} \cdots >_{lex} 1(01)^{\omega}>_{lex} 01^{\omega}$.
\end{example}

We could instead define $\beta$-expansions for any $\beta>1$ by taking $a_i\in\{0,1,\dots,\lfloor\beta\rfloor\}$. However, restricting ourselves to $\beta\in(1,2)$ is particularly useful when looking at what is known as the greedy expansion.

\begin{defn}
    If $a_1a_2\dots$ is the maximal lexicographic expansion $\beta$ for $x$ with base $\beta$, then we say that $a_1a_2\dots$ is the \textbf{greedy expansion} for $x$ with base $\beta$. This is denoted by $d_{\beta}(x)$. 
\end{defn}

One can describe an algorithm which for a given value $x$ and base $\beta$ produces the greedy expansion of $x$ in base $\beta$. By restricting to $\beta\in(1,2)$, we ensure that this algorithm has only two possibilities to output a given digit. We also consider the lexicographically largest infinite representation, which is called the \textbf{quasi-greedy expansion} and write $d_{\beta}^*(x)$. For example, the Golden Ratio $\phi\approx 1.61803$ has greedy $\beta$-expansion for $1$ given by the sequence $d_{\beta}(1)=11$. If we apply the greedy algorithm while ensuring that the representation is not finite, we find that $1(01)^{\omega}$ is another valid $\beta$-expansion for $1$ and is the largest, which is not finite. See \cite{komo} for a more detailed construction and further results on the subject.

\begin{defn}
    If $a_1a_2\dots$ is a $\beta$-expansion and suppose that there exists $k,\ell\in\mathbb{N}$ such that $a_{n}=a_{n+\ell}$ for all $n>k$ we say that $a_1a_2\dots$ is \textbf{eventually periodic} and we write it as $a_1\dots a_k(a_{k+1}\dots a_{k+\ell})^{\omega}$.
    We call the string $a_1\dots a_k$ the preperiodic part of the expansion and we say the preperiod has length $k$. The string $a_{k+1}\dots a_{k+\ell}$ is called the periodic part of the expansion and has length $\ell$.
\end{defn}

We will primarily be interested in $\beta$-expansions for $1$ that are finite or eventually periodic. In such a case, we can define a polynomial whose coefficients are related to the said expansion. In particular, if $1$ has a finite greedy $\beta$-expansion given by $a_1\dots a_k$, then we can write the quasi-greedy $\beta$-expansion as $a_1\dots a_{k-1}(a_k-1)(a_1\dots a_{k-1}(a_k-1))^{\omega}$, which can be viewed as having either an empty preperiod or a preperiod of length $k$.

\begin{defn}
    Consider the greedy expansion $d_{\beta}(1)=a_1a_2\dots a_k$ if the greedy expansion is finite and $d_{\beta}(1)=a_1a_2\dots a_k(a_{k+1}\dots a_{k+\ell})^{\omega}$ if the greedy expansion is eventually periodic. Define $P_j(x)=x^j-a_1x^{j-1}-\cdots-a_j$. The \textbf{companion polynomial} is defined as
    \begin{equation}
        R(x) = \begin{cases} 
          P_k(x) & \text{finite case} \\
          P_{k+\ell}(x)-P_k(x) &  \text{periodic case.}
       \end{cases} 
       \label{companion}
    \end{equation}
\end{defn}    

Note that we need not take $k$ to be the minimal length of the preperiod, the resulting polynomial will have an extra factor of the form $x^m$, where $m$ is the difference between the length of the preperiod used and that of minimal length.

\begin{rmk}
    In both cases, we have $R(\beta)=0$. Together with the fact that $R(x)$ is a monic integer polynomial, it follows that if the greedy $\beta$-expansion for 1 is finite or eventually periodic, then $\beta$ is an algebraic integer.
\end{rmk}
   
\begin{defn}   
   If $\beta$ has a minimal polynomial $M(x)$, we have $M(x)|R(x)$. We write $R(x)=M(x)Q(x)$. We call $Q(x)$ \textbf{co-factor} of the $\beta$-expansion. 
\end{defn}

We will be looking at specific sequences and we will use the following theorem to help determine if they are greedy $\beta$-expansions for 1.

\begin{theorem}[Parry \cite{parry}]
    Let $\mathbf{a}=(a_n)_{n\geq 1}$ be a sequence in $\{0,1\}^{\mathbb{N}}$. That is, $\mathbf{a}$ is a non-empty finite or infinite sequence $a_1\dots a_k$ or $a_1a_2\dots$ with $a_i\in\{0,1\}$. Then the sequence $\mathbf{a}$ is the greedy expansion of $1$ for some $\beta>1$ if and only if for all $j\geq 1$
    $$\sigma^j(\mathbf{a})<_{\text{lex}}\mathbf{a}.$$
    Here $\sigma(a_1a_2\dots)=a_2a_3\dots$. 
    \label{greedy}
\end{theorem}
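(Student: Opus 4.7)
The plan is to prove the two directions separately, leveraging the $\beta$-transformation $T_\beta(x)=\beta x-\lfloor\beta x\rfloor$ that underlies the greedy algorithm.

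\textbf{Forward direction.} Assume $\mathbf{a}=d_\beta(1)$. Iterating $T_\beta$ starting from $1$ produces remainders $r_n\in[0,1)$ with $a_{n+1}=\lfloor\beta r_n\rfloor$. Expanding the recursion, one finds
\[
r_n=\sum_{j=1}^{\infty}a_{n+j}\beta^{-j},
\]
so $\sigma^n(\mathbf{a})$ is itself the greedy expansion of $r_n$. Since greedy expansions respect the natural order on $[0,1]$ (smaller value yields a lex-smaller greedy expansion) and $r_n<1=r_0$ for every $n\geq 1$, we conclude $\sigma^n(\mathbf{a})<_{\text{lex}}\mathbf{a}$.

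\textbf{Backward direction.} Assume Parry's shift condition. First, the case $j=1$ together with digits in $\{0,1\}$ forces $a_1=1$ (else iterated shifts produce $\mathbf{a}=\bar 0$, contradicting strict lex inequality) and rules out $\mathbf{a}=\bar 1$ (for which $\sigma\mathbf{a}=\mathbf{a}$). Define
\[
f(\beta)=\sum_{n=1}^{\infty}a_n\beta^{-n}\qquad (\beta>1),
\]
which is continuous and strictly decreasing, with $f(2)<1$ since $\mathbf{a}\neq\bar 1$ and $f(\beta)\to\infty$ (or to a value $\geq 1$) as $\beta\to 1^+$ under the nondegeneracy imposed by Parry. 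The intermediate value theorem then produces a unique $\beta\in(1,2)$ with $f(\beta)=1$; this is the candidate base. I would then show by induction on $n$ that the greedy algorithm applied to $1$ in base $\beta$ reproduces $\mathbf{a}$: granted that the first $n$ greedy digits coincide with $a_1,\dots,a_n$, the remainder equals $r_n=\sum_{j=1}^{\infty}a_{n+j}\beta^{-j}$, and the algorithm selects $a_{n+1}$ as the next digit and continues precisely when $r_n<1$.

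\textbf{Main obstacle.} The crucial step is converting the lexicographic inequality $\sigma^n(\mathbf{a})<_{\text{lex}}\mathbf{a}$ into the numerical inequality $r_n<1$. For $\beta\in(1,2)$ this implication is subtle: the sum $\sum\beta^{-j}=1/(\beta-1)$ exceeds $1$, so a single lex deficit at position $k$ (of size $\beta^{-k}$) can in principle be overwhelmed by later $+1$ contributions in the term-by-term difference. Hence the hypothesis must be used at every shift simultaneously. The strategy I would adopt is a minimal-counterexample argument: let $n\geq 1$ be the least index with $r_n\geq 1$, let $k$ be the first disagreement between $\sigma^n(\mathbf{a})$ and $\mathbf{a}$ (so $a_k=1$, $a_{n+k}=0$), and subtract the two series representing $r_n$ and $1$ to obtain
\[
r_{n+k}-r_k\geq 1.
\]
Applying the Parry condition to the shifted sequence at index $n+k$ and iterating this "boost" either produces an index below $n$ violating the minimality choice, or generates an unbounded sequence of tail values, contradicting the universal bound $r_m\leq 1/(\beta-1)$. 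This closes the induction and yields $\mathbf{a}=d_\beta(1)$.
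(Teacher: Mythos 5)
This statement is Parry's criterion, which the paper quotes from \cite{parry} without proof, so there is no in-paper argument to compare against; judged on its own, your outline is essentially the standard and correct route (tails of a greedy expansion are greedy expansions of the remainders $r_n<1$ for the forward direction; solve $\sum a_n\beta^{-n}=1$ and verify $r_n<1$ for the converse), and you correctly identify the converse's crux. Two points need repair before it is a proof.

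First, existence of $\beta$: the Parry condition alone does not force $\sum_n a_n\geq 2$. The sequence $10^\omega$ satisfies $\sigma^j(\mathbf{a})<_{\text{lex}}\mathbf{a}$ for all $j$, yet $f(\beta)=\beta^{-1}<1$ for every $\beta>1$, so no base exists (this degenerate case is really a blemish in the statement itself and must be excluded or handled separately); for every other admissible sequence one has $a_1=1$ and a second nonzero digit, whence $f(2)<1<\lim_{\beta\to1^+}f(\beta)$ and the IVT applies. Second, your closing dichotomy is misstated: from $r_n\geq 1$ the comparison at the first disagreement $k$ produces the index $n+k>n$, so no ``index below $n$'' can arise and minimality buys nothing. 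What does close the argument is the exact identity
\begin{equation*}
r_{n+k}-1=\beta^{k}\,(r_n-1)+r_k ,
\end{equation*}
valid because $a_j=a_{n+j}$ for $j<k$, $a_k=1$, $a_{n+k}=0$. If $r_n>1$ the excess is multiplied by at least $\beta>1$ at each step, so iterating gives $r_m-1\to\infty$, contradicting $r_m\leq 1/(\beta-1)$ (your ``unbounded tails'' branch). If $r_n=1$ and $r_k>0$ you land in the previous case after one step; and if $r_n=1$ with $r_k=0$, then $a_{k+i}=0$ for all $i\geq1$, so $r_{n+k}=0$ while the identity forces $r_{n+k}=1$, a contradiction. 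With these two fixes (and the routine verification that $r_n<1$ for all $n$ together with $f(\beta)=1$ characterizes the greedy expansion of $1$), your proposal becomes a complete proof.
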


If $\mathbf{a}=a_1a_2\dots a_k(a_{k+1}\dots a_{k+\ell})^{\omega}$, then it suffices to check the above condition only with $1\leq j\leq k+\ell$.

We define two families of algebraic integers that often have particularly nice greedy expansions.

\begin{defn}
    A \textbf{Pisot number} $q$ is a real algebraic integer $q>1$ such that all conjugates of $q$ are strictly less than $1$ in modulus.
\end{defn}

There is a great deal of study on Pisot numbers and their structure. For example, the real root of $x^3-x-1$ is proven to be the smallest Pisot number \cite{siegel}. In Section 4, we will discuss what is known about the limit points of the set of Pisot numbers within the interval $(1,2)$.

We will introduce the notion of regular Pisot numbers in Section 4, these have significant structure in the interval $(1,2)$. An extensive study has been done on greedy expansions for $1$ under such bases. In fact, we have a complete description of the greedy expansion of $1$ under any regular Pisot base, which has been established by Panju \cite{panju}. 

The next family of algebraic integers is the primary concern of these results and they are closely related to Pisot numbers.

\begin{defn}
    A \textbf{Salem number} $\alpha$ is a real algebraic integer $\alpha>1$ such that all conjugates of $\alpha$ are less than or equal to $1$ in modulus and at least one conjugate is equal to $1$ in modulus.
\end{defn}

Comparatively, much less is known in reference to Salem numbers than about the Pisot numbers. For example, the smallest known Salem number is the real root of $x^{10}+x^9-x^7-x^6-x^5-x^4-x^3+1+1$, yet no proof that it is the smallest in the set has been widely accepted.

If $M(x)$ is the minimal polynomial of a Pisot number, then for sufficiently large $m$, $T_m^{\pm}(x)=M(x)x^m\pm M^*(x)$ admits a Salem number as a root. Here, $M^*(x)=x^{\operatorname{deg}(M)}M(1/x)$ is the reciprocal polynomial of $M(x)$. Moreover, as $m\rightarrow\infty$, these Salem numbers approach the Pisot number as a two-sided limit. Note that $T_m^{\pm}(x)$ may not be the minimal polynomial for the Salem number, as it may have cyclotomic factors. In fact, in the minus case, $T_m^-(x)$ always has at least $(x-1)$ as a factor.

An important result on Pisot numbers states that if $q$ is a Pisot number, then the greedy expansion of $1$ in base $q$ is either finite or eventually periodic \cite{bertrand}. This is not true for a general algebraic integer; for example, $\beta=\sqrt{2}$ does not admit a finite or periodic expansion. To see this, we note that if it did have such an expansion then $(x^2-2)|R(x)$. However, $R(x)$  has a tail coefficient of $\pm 1$, as seen by Definition \ref{companion}. We also know that all Salem numbers of degree 4 admit periodic $\beta$-expansions for 1, and it is conjectured that the same holds for degree 6 but not in general, \cite{boyd1} and \cite{boyd2}.

It is not a sufficient condition to have a periodic $\beta$-expansion for 1 to guarantee that the base is a Pisot or Salem number. For example $\beta\approx 1.7403$, the root of $x^8-x^7-x^6-x^3-x-1$ has $d_{\beta}(1)=11001011$ but is not Pisot or Salem. The following result is known to be true.

\begin{theorem}{(Schmidt, 1980 \cite{schmidt})}
    If $d_{\beta}(x)$ is periodic or finite for all $x\in\mathbb{Q}\cap[0,1)$, then $\beta$ is a Pisot or Salem number.
\end{theorem}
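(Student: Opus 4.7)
The plan is to prove the contrapositive: assuming $\beta$ is neither Pisot nor Salem, I would produce some rational $x \in \mathbb{Q} \cap [0,1)$ whose greedy $\beta$-expansion is neither finite nor eventually periodic. First I would use the hypothesis to force $\beta$ to be an algebraic integer. For any rational $x = p/q \in [0,1)$ with assumed eventually periodic expansion $a_1 \ldots a_k (a_{k+1} \ldots a_{k+\ell})^{\omega}$, summing the geometric tail yields
$$\frac{p}{q} = \sum_{j=1}^{k} a_j \beta^{-j} + \frac{\beta^{-k}}{1 - \beta^{-\ell}} \sum_{j=1}^{\ell} a_{k+j}\beta^{-j}.$$
Clearing denominators and multiplying by $\beta^{k+\ell}$ gives an integer polynomial annihilating $\beta$, essentially the companion-polynomial construction of Definition \ref{companion} applied to $p/q$ in place of $1$. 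Running this for a rational whose denominator is coprime to a chosen leading coefficient (or combining several such relations via a gcd) extracts the minimal polynomial of $\beta$, and since all digits are integers it can be normalised to be monic, so $\beta$ is an algebraic integer.

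Next I would rule out any Galois conjugate $\beta' \neq \beta$ with $|\beta'| > 1$. The $\beta$-transformation $T_\beta(y) = \beta y - \lfloor \beta y \rfloor$ iterated from a rational $y = p/q$ satisfies $T_\beta^n(y) = \beta^n (p/q) - c_n$ with $c_n \in \mathbb{Z}$, and eventual periodicity of the expansion is equivalent to the orbit $\{T_\beta^n(y)\}_{n \geq 0}$ being a finite subset of $[0,1)$. Applying the Galois automorphism $\sigma : \beta \mapsto \beta'$ to these identities produces the conjugate quantities $\sigma(T_\beta^n(y)) = (\beta')^n(p/q) - c_n$, which must themselves take only finitely many values. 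Since $|\beta'| > 1$, the terms $(\beta')^n (p/q)$ grow in modulus, so the integer corrections $c_n$ would have to grow in lockstep; choosing $p/q \neq 0$ carefully (pigeonholing on the residues of $c_n$ modulo $q$) forces two iterates $T_\beta^m(y), T_\beta^n(y)$ to collide in $[0,1)$ while their Galois conjugates do not, yielding a rational with an infinite orbit and contradicting the hypothesis.

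The main obstacle is this second step. Although the intuition ``Galois-conjugate the expansion to watch $|\beta'|^n$ blow up'' is clean, the integer corrections $c_n$ could in principle absorb the exponential growth, and ruling this out requires a delicate choice of rational $y$ together with a quantitative bound on how the $c_n$ distribute among residues modulo $q$. Schmidt's original argument handles this by a careful pigeonhole across denominators, and I would expect to need the same refinement here. Step 1, by contrast, is a routine manipulation of the companion-polynomial identity; once both steps are in hand the conclusion ``Pisot or Salem'' follows immediately, as every non-$\beta$ conjugate then satisfies $|\beta'| \leq 1$, with at least one of modulus exactly $1$ giving the Salem case and strict inequality throughout giving the Pisot case.
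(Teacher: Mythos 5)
The paper itself does not prove this statement; it is quoted from Schmidt with a citation, so your proposal can only be measured against the standard argument, and there it has a genuine gap. The central step of your plan rests on a false identity: you write $T_\beta^n(y)=\beta^n y-c_n$ with $c_n\in\mathbb{Z}$. In fact $c_n=\sum_{i=1}^{n}a_i\beta^{\,n-i}$ is an element of $\mathbb{Z}[\beta]$, not an integer, so applying the conjugation $\sigma:\beta\mapsto\beta'$ gives $\sigma(T_\beta^n(y))=(\beta')^n y-\sum_{i=1}^{n}a_i(\beta')^{\,n-i}$, and the subtracted digit polynomial itself grows like $|\beta'|^n$ and can absorb the growth of $(\beta')^n y$; no contradiction with the finiteness of the orbit follows. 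A sanity check shows your premise cannot be patched in place: if $c_n$ really were an integer determined by the real embedding, then $\sigma(T_\beta^n(y))-T_\beta^n(y)=\bigl((\beta')^n-\beta^n\bigr)y$ would take only finitely many values for \emph{every} conjugate $\beta'\neq\beta$, including those of modulus less than $1$, and you would conclude that $\beta$ has no conjugates at all --- contradicted already by the golden ratio, a Pisot number for which all rationals do have eventually periodic expansions. The proposed endgame is also impossible as stated: ``two iterates collide in $[0,1)$ while their Galois conjugates do not'' cannot happen, because the iterates are elements of the field $\mathbb{Q}(\beta)$, and equality of two field elements forces equality of all of their conjugates.

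The hard content of Schmidt's theorem is precisely the step you hoped to dispatch with a pigeonhole on $c_n\bmod q$: ruling out a conjugate with $|\beta'|>1$ cannot be done by conjugating the orbit of one cleverly chosen rational. Schmidt's argument works with many rationals at once --- for a fixed denominator $q$ one compares the number of distinct orbit points that the growth $|\beta'|^N$ forces the points $p/q$, $0\le p<q$, to generate within $N$ steps against the number of values in $\tfrac{1}{q}\mathbb{Z}[\beta]\cap[0,1)$ with controlled conjugates that eventual periodicity makes available --- and it is this counting that fails exactly when all conjugates have modulus at most $1$, leaving the Pisot and Salem cases. By contrast, your Step 1 is essentially fine, and the vague ``normalise to be monic'' remark is unnecessary: take $p=1$, say $x=1/q$; clearing denominators in your displayed identity gives $\beta^{k+\ell}-\beta^{k}=q\cdot\bigl(\text{integer polynomial in }\beta\text{ of degree}<k+\ell\bigr)$, a monic integer relation, so $\beta$ is an algebraic integer directly.
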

Schmidt also conjectured that the converse of this theorem holds.

In what follows, we study the greedy expansions for 1 where the base is a Pisot or Salem number. We give conditions to be able to determine when a Salem number has periodic expansion, this expansion is given in terms of the Pisot limit that the family of Salem numbers are approaching. Toward this goal, we consider the following condition, which is well defined. It is an expansion of the condition considered by Hare and Tweedle, \cite{hareTweedle}.

\begin{cod}
    We say an algebraic integer $q$ with $d_q(1)=a_1\dots a_k(a_{k+1}\dots a_{k+\ell})^{\omega}$ has a \textbf{reversibly greedy} $\beta$-expansion if it is greedy and;
    $$a_1\dots a_{k+\ell}>_{\text{lex}}a_{k+\ell-i}a_{k+\ell-i-1}\dots a_2\;\text{ for all } i, \text{ with } 0\leq i\leq k+\ell-2.$$
    \label{revgre}
\end{cod}

Here we replace finite sequences by the sequence of appropriate length by appending a number of 0's to the end as needed. In general, we will want to consider the cases where the given algebraic integer is a Pisot number. Note that for future reference, if instead we use $d_q^*(1)$, the quasi-greedy expansion, then this condition is satisfied only if it is for $d_q(1)$.

\begin{prop}
    Suppose $a_1\dots a_k$ is a finite reversibly greedy $\beta$-expansion. Then $(b_1\dots b_k)^{\omega}$ satisfies Condition \ref{revgre}, where $b_i=a_i$ for $i=1,\dots, k-1$ and $b_k=a_k-1$.
    \label{quasi}
\end{prop}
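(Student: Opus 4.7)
The plan is to unpack Condition \ref{revgre} applied to $(b_1\dots b_k)^\omega$ as a finite list of lexicographic inequalities and verify each one directly. Reading $(b_1\dots b_k)^\omega$ as an eventually periodic sequence with empty preperiod and period of length $k$, the reverse condition becomes
$$ b_1\dots b_k \;>_{\text{lex}}\; b_{k-i}\,b_{k-i-1}\dots b_2, \quad 0\le i\le k-2, $$
where both sides are padded with trailing zeros for the comparison. Since $a_1\dots a_k$ is a finite binary greedy expansion of $1$ with $q\in(1,2)$, we have $a_1=a_k=1$, so $b_1=1$, $b_k=0$, and $b_j=a_j$ for $1\le j\le k-1$.

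The case $i=0$ is immediate: the LHS begins with $a_1=1$ while the RHS begins with $b_k=0$, so the inequality is strict at position $1$. For $i\ge 1$, the RHS does not involve $b_k$, so the $b$-RHS coincides with the $a$-RHS, and the $b$-LHS differs from the $a$-LHS only at position $k$. By the reversibly greedy hypothesis for $a_1\dots a_k$, there is a smallest position $p$ at which the padded sequences $a_1\dots a_k\,0\,0\dots$ and $a_{k-i}\dots a_2\,0\,0\dots$ disagree, with $a_p$ strictly larger than the RHS entry. If $p<k$, then $b_j=a_j$ for all $j\le p$, and the same $p$ witnesses the strict inequality on the $b$-side, finishing the case.

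The main obstacle is ruling out $p\ge k$. Suppose $p=k$; then positions $1,\dots,k-1$ agree on both sides. Since the padded RHS equals $a_{k-i+1-j}$ at position $j$ for $1\le j\le k-i-1$ and equals $0$ for $j\ge k-i$, matching position $1$ gives $a_1=a_{k-i}$, while matching position $k-i$ (which lies in the trailing-zero region because $i\ge 1$) gives $a_{k-i}=0$. Together these force $a_1=0$, contradicting $a_1=1$. The case $p>k$ is analogous and would force $a_k=0$ against $a_k=1$. This palindromic-style bookkeeping of indices is the delicate step; once it is in place, the greedy facts $a_1=a_k=1$ supply the needed contradictions, yielding $p\le k-1$ and completing the proof.
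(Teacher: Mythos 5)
Your proof is correct and takes essentially the same route as the paper's: both reduce the inequalities for $(b_1\dots b_k)^{\omega}$ to the corresponding inequalities for $a_1\dots a_k$, using that $b_j=a_j$ for $j\le k-1$ and that the reversed strings $b_{k-i}\dots b_2$ (for $i\ge 1$) never involve the altered last digit. If anything, your write-up is more careful than the paper's terse argument, since you handle $i=0$ separately via $b_1=1>0=b_k$ and explicitly rule out the possibility that the witness of the $a$-inequality occurs only at position $k$ (using $a_1=1$ together with the zero padding at position $k-i$), a point the paper's truncation step glosses over.
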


\begin{proof}
    Let $\mathbf{c_i}=b_{k-i}\dots b_2$ and $\mathbf{b}=b_1\dots b_k$. We need to show $\mathbf{b}>_{\text{lex}}\mathbf{c_i}$ for each $i\leq k-2$. The length of the string $\mathbf{b_i}$ is less than $k$, hence we can replace $\mathbf{b}$ by the string $b_1\dots b_{k-1}$. We can do the same for the analogous strings $\mathbf{a}=a_1\dots a_k$ and $\mathbf{d_i}=a_{k-i}\dots a_2$.

    We have $a_1\dots a_{k-1}=b_1\dots b_{k-1}$ and $\mathbf{a_i}>_{\text{lex}}\mathbf{b_i}$, the result follows.
\end{proof}

Given a Pisot number, $\beta$, with defining polynomial $M(x)$ and a natural number $m$ we aim to give the $\beta$-expansion for the related Salem numbers $\alpha_m^{\pm}$ defined by $M(x)x^m\pm M^*(x)$.

In Section 2 we review previous work by Hare and Tweedle \cite{hareTweedle} which covers the positive case where $\beta$ is a Pisot number with finite greedy expansion for 1. Further, we expand these results to the case where the expansion is eventually periodic. We give sufficient criteria to determine the greedy expansion for 1 under certain Salem number bases related to $\beta$.

Section 3 considers the negative cases, which is once again split between the eventually periodic and finite cases. We explain in this section that the finite case is a direct consequence of the eventually periodic case when we consider the quasi-greedy expansion.

Section 4 includes the motivation behind the results and gives some infinite family of Pisot numbers that satisfy the conditions of the theorems of Sections 2 and 3. These results show that the conditions of these theorems are relatively common occurrences. Finally, Section 5 mentions some further work that could be done on this topic.

\section{The Positive Case}

\subsection{Finite Case}

Hare and Tweedle \cite{hareTweedle} considered the case where $q$ had a finite $\beta$-expansion for $1$. They were able to determine a relationship between the $\beta$-expansion for the Pisot number and the $\beta$-expansions for the related Salem numbers defined by $M(x)x^m+M^*(x)$.

\begin{theorem}{(Hare \& Tweedle, 2008)}
    Let $q$ be a Pisot number with minimal polynomial $M(x)$. Further, let $q$ have a finite reversibly greedy $\beta$-expansion, say $a_1a_2\dots a_k$. In addition, assume that the companion polynomial of $q$ has co-factor polynomial $Q(x)$, which is reciprocal. Let $\alpha_m^+$ be the Salem number that satisfies the polynomial $M(x)x^m+M^*(x)$. Then for $m>2k$ we have the $\beta$-expansion of $\alpha_m^+$ is
    $$\mathbf{a_m^+}=a_1(a_2\dots a_k0^{m-k-1}a_k\dots a_200)^{\omega}.$$
\end{theorem}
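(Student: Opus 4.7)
The plan is to combine Parry's theorem (Theorem \ref{greedy}) with a direct factorization of the companion polynomial of the proposed sequence. First I will verify that $\mathbf{a}_m^+$ is a $\beta$-expansion of $1$ in base $\alpha_m^+$ by exhibiting a factorization $R(x) = Q(x)\,T_m^+(x)$, and then I will verify the Parry inequalities $\sigma^j(\mathbf{a}_m^+) <_{\text{lex}} \mathbf{a}_m^+$ for $1 \leq j \leq m+k-1$; by the remark following Theorem \ref{greedy} this is sufficient.

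Viewing $\mathbf{a}_m^+$ as having preperiod $a_1$ of length one and period of length $m+k-1$, Definition \eqref{companion} gives $R(x) = P_{m+k}(x) - P_1(x)$. Let $P(x) = x^k - a_1 x^{k-1} - \cdots - a_k$ denote the Pisot companion polynomial, so that $P = MQ$, and let $P^*(x) = 1 - a_1 x - \cdots - a_k x^k$; because $Q$ is reciprocal, $P^*(x) = M^*(x)Q(x)$. A direct accounting of the digits of $\mathbf{a}_m^+$ gives
\[
    P_{m+k}(x) \;=\; x^m P(x) - \sum_{j=2}^{k} a_j x^j,
\]
where the subtracted sum captures the contribution of the reversed block $a_k \cdots a_2$. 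Combining this with $P_1(x) = x - a_1$ and using $a_1 = 1$ (which holds since $q \in (1,2)$), the leftover terms assemble into $P^*(x)$, yielding
\[
    R(x) \;=\; x^m P(x) + P^*(x) \;=\; Q(x)\bigl(x^m M(x) + M^*(x)\bigr) \;=\; Q(x)\,T_m^+(x).
\]
In particular $R(\alpha_m^+) = 0$, so $\mathbf{a}_m^+$ is a valid $\beta$-expansion of $1$ at base $\alpha_m^+$.

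For the Parry check, I would partition the shifts $j$ by which block of the period $a_2 \cdots a_k\,0^{m-k-1}\,a_k\cdots a_2\,00$ they land in. Shifts into either zero block begin with $0 < 1 = a_1$, so the inequality is immediate. Shifts into the forward block begin $a_{j+1}\cdots a_k\,0^{m-k-1}\cdots$, and Parry's condition for the original greedy expansion $a_1\cdots a_k$ of $q$ forces a first point of discrepancy with $a_1 \cdots a_k$ inside the first $k$ positions; the hypothesis $m > 2k$ ensures that the zero blocks of $\mathbf{a}_m^+$ and the shifted word are long enough to cover these positions, so that no later digit can overturn the discrepancy. The crux is shifts into the reversed block, where the shifted word begins $a_{k-i} a_{k-i-1} \cdots a_2\,00\cdots$; here the required inequality with $a_1\cdots a_k \cdots$ is exactly Condition \ref{revgre}, $a_1 \cdots a_k >_{\text{lex}} a_{k-i} \cdots a_2$.

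The main obstacle is confirming that Condition \ref{revgre} transfers cleanly to the Salem setting, since the tail of the shifted word in $\mathbf{a}_m^+$ is the next period of $\mathbf{a}_m^+$ rather than an infinite string of zeros. The key observation is that if equality in Condition \ref{revgre} were to persist through position $k-i-1$, then matching the first and last of these equal positions would force $a_1 = a_{k-i}$, and $a_1 = 1$ would then give $a_{k-i} = 1$; consequently the first discrepancy guaranteed by the condition must occur at some position $p \leq k-i$. This is precisely the range in which the shifted word already has only its reversed suffix followed by the two trailing zeros, so the same $p$ serves as the first point of discrepancy between $\sigma^j(\mathbf{a}_m^+)$ and $\mathbf{a}_m^+$. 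The strict inequality of Condition \ref{revgre} therefore passes directly to the required Parry inequality, completing the greedy verification and hence the proof.
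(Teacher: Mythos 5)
Your proposal is correct and follows essentially the same route the paper (and Hare--Tweedle) takes: show that the companion polynomial of $\mathbf{a}_m^+$ factors as $Q(x)\bigl(M(x)x^m+M^*(x)\bigr)$, so the sequence is a valid $\alpha_m^+$-expansion of $1$, and then verify Parry's lexicographic conditions using the greediness of $a_1\cdots a_k$ for the forward shifts and Condition \ref{revgre} (together with $a_1=1$ and $m>2k$) for the reversed block. The paper only sketches these two steps, and your write-up fills in the same argument in more detail, including the correct observation that the first discrepancy in the reversed-block comparison occurs by position $k-i$, so the periodic tail cannot interfere.
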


They prove this by showing two sufficient criteria. The first is that the polynomial $M(x)x^m+M^*(x)$ divides the companion polynomial of the expansion, $a_1(a_2\dots a_k0^{m-k-1}a_k\dots a_200)^{\omega}$. This is sufficient to show that the expansion is an $\alpha_m^+$-expansion of $1$. This is because the minimal polynomial of the Salem number $\alpha_m^+$ divides $M(x)x^m+M^*(x)$ by construction. They then used Theorem 1.4 to conclude that the expansion is, in fact, a greedy expansion for $1$ under some base. Combining these two results shows that it is the greedy expansion for $1$ under base $\alpha_m^+$.

To aid in our future cases, we can rewrite this case using the following notation. We let $\gamma=a_1a_2\dots a_k$ be the reversibly greedy $q$-expansion for $1$ and let $\kappa = a_2\dots a_k=\sigma(\gamma)$ be the same sequence beginning with the second entry.

Then the theorem states that the greedy $\alpha_m^+$-expansion for $1$ is given by,
$$\mathbf{a_m^+}=1(\kappa 0^{m-k-1}\kappa^*00)^{\omega}.$$

Here $\kappa^*=a_k\dots a_2$ is the inverted string. Note that this result holds for all finite reversibly greedy Pisot numbers and sufficiently large $m$, in what follows our results will be conditional on further properties to be discussed.

\subsection{Periodic Case}

Let $\beta$ be a Pisot number with minimal polynomial $M(x)$. Suppose that the greedy $\beta$-expansion for $1$ is given by $a_1\dots a_k(a_{k+1}\dots a_{k+\ell})^{\omega}$ and that it satisfies Condition 1.8. Let $R(x)$ and $Q(x)$ be the companion polynomial and co-factor of this expansion, respectively, so that $R(x)=M(x)Q(x)$, where $R(x)$ satisfies Equation \ref{companion}. We aim to show a relationship between the greedy $\beta$-expansion and the greedy expansions of the Salem numbers $\alpha_m^+$ which are the roots of the polynomials $T_m(x)=M(x)x^m+M^*(x)$.

\begin{example}
    Considering the Pisot number with the defining polynomial $M(x)=x^4-x^3-2x^2+1$, we know that it has $beta$-expansion given by $11(10)^{\omega}$. For sufficiently large values of $m$ we can define the polynomial $T_m(x)=M(x)x^m+M^*(x)$ which yields a Salem number. We look at the beta-expansions of successive Salem numbers, see Table \ref{Salem}.

\begin{table}[h]
    \centering
    \begin{tabular}{l|l|l}
        $m$ & $\beta$-expansion & $\beta$-expansion\\ \hline
        4 & $1(110001100)^{\omega}$ & $1(1(10)0(01)100)^{\omega}$ \\
        5 & $1(110011001100)^{\omega}$ & $1(1(10)0110(01)100)^{\omega}$ \\
        6 & $1(1101000101100)^{\omega}$ & $1(1(10)^20(01)^2100)^{\omega}$ \\
        7 & $1(1101001100101100)^{\omega}$ & $1(1(10)^20110(01)^2100)^{\omega}$ \\
        8 & $1(11010100010101100)^{\omega}$ & $1(1(10)^30(01)^3100)^{\omega}$ \\
        9 & $1(11010100110010101100)^{\omega}$ & $1(1(10)^30110(01)^3100)^{\omega}$ \\
        10 & $1(110101010001010101100)^{\omega}$ & $1(1(10)^40(01)^4100)^{\omega}$ \\
        11 & $1(110101010011001010101100)^{\omega}$ & $1(1(10)^40110(01)^4100)^{\omega}$ \\
    \end{tabular}
    \caption{$\beta$-expansion for some Salem Numbers}
    \label{Salem}
\end{table}

    We observe a pattern. The even values of $m$ give an expansion of the form $1(1(10)^d0(01)^d100)^{\omega}$ where $m=2(d+1)$ and the odd values of $m$ give an expansion of the form $1(1(10)^d0110(01)^d100)^{\omega}$ where $m=2(d+1)+1$.
    \label{first example}
\end{example}

This is an example of a more general phenomenon that we will prove with the following theorem.

\begin{theorem}
    Let $q$ be a Pisot number with minimal polynomial $M(x)$. Further, let $q$ have an infinite reversibly greedy $\beta$-expansion, say $a_1a_2\dots a_k(a_{k+1}\dots a_{k+\ell})^{\omega}$. Next we assume that the companion polynomial of $q$ has co-factor polynomial $Q(x)$, which is reciprocal. Let $\alpha_{n\ell+j}^+$ be the Salem number that satisfies the polynomial $M(x)x^{n\ell+j}+M^*(x)$ for each $j=1,\dots,\ell$. We let $\gamma=a_2\dots a_k$ and $\kappa=a_{k+1}\dots a_{k+\ell}$.
    
    Suppose that the greedy $\beta$-expansion for $1$ in base $\alpha_{\ell+j}^+$ is given by
    \begin{equation}
        \mathbf{a_{(1,j)}}=a_1(a_2\dots a_k\tau_ja_k\dots a_200)^{\omega}=1(\kappa\tau_j\kappa^*00)^{\omega},
    \end{equation}

    for some string $\tau_j$ of length $3\ell-k-1+j$.

    Then for $n\geq 2$ the greedy $\beta$-expansion for $1$ in base $\alpha_{n\ell+j}^+$ is given by
    \begin{align}
        \mathbf{a_{(n,j)}}&=a_1(a_2\dots a_k(a_{k+1}\dots a_{k+\ell})^{n-1}\tau_j(a_{k+\ell}\dots a_{k+1})^{n-1}a_k\dots a_200)^{\omega}\nonumber\\
        & =1(\kappa\underbrace{\gamma\gamma\dots\gamma}_{n-1}\tau_j\underbrace{\gamma^*\gamma^*\dots\gamma^*}_{n-1}\kappa^*00)^{\omega}.
    \end{align}
    \label{MAIN}
\end{theorem}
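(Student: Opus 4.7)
The plan is to adapt the two-step argument Hare and Tweedle used in the finite case. Letting $R_{n,j}(x)$ denote the companion polynomial of the candidate sequence $\mathbf{a_{(n,j)}}$, it suffices to establish (i) that $\mathbf{a_{(n,j)}}$ satisfies Parry's criterion (Theorem \ref{greedy}), so it is the greedy expansion of $1$ in some base $\beta > 1$, and (ii) that $T_{n\ell+j}(x) = M(x)x^{n\ell+j} + M^*(x)$ divides $R_{n,j}(x)$. Together these force $\beta = \alpha_{n\ell+j}^+$, since $\alpha_{n\ell+j}^+$ is the only real root of $T_{n\ell+j}$ strictly greater than $1$.

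For (ii), write $R_{n,j}(x) = (x^{L_n} - 1)(x - 1) - \Pi_{n,j}(x)$, where $L_n = k + (2n+1)\ell + j - 1$ is the period length and $\Pi_{n,j}$ encodes the period $\gamma \kappa^{n-1} \tau_j (\kappa^*)^{n-1} \gamma^* 00$. Geometric summation produces telescoping contributions $K(x)(x^{(n-1)\ell} - 1)/(x^\ell - 1)$ and $K^*(x)(x^{(n-1)\ell} - 1)/(x^\ell - 1)$ from the $\kappa^{n-1}$ and $(\kappa^*)^{n-1}$ blocks, with $K$ and $K^*$ the polynomial encodings of $\kappa$ and $\kappa^*$. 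Combining these with the Pisot identity $M(x)Q(x) = (x^\ell - 1)P_k(x) - K(x)$, its reciprocal form (which exists precisely because $Q$ is reciprocal, and relates $M^*$, $Q$, $P_k^*$, $K^*$ via the reversal identities $\Gamma^*(x) = x^{k-2}\Gamma(1/x)$ and $K^*(x) = x^{\ell-1}K(1/x)$), and the base-case hypothesis $T_{\ell+j}(x) \mid R_{1,j}(x)$, one should extract a factorization $R_{n,j}(x) = T_{n\ell+j}(x) \cdot C_{n,j}(x)$. The example in Section 2.2 (where $Q=1$, $\ell=2$) suggests the explicit closed form $C_{n,j}(x) = Q(x)\bigl(1 + x^\ell + x^{2\ell} + \cdots + x^{n\ell}\bigr)$, itself reciprocal; pinning down this closed form would make the divisibility transparent.

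For (i), I would verify $\sigma^i(\mathbf{a_{(n,j)}}) <_{\mathrm{lex}} \mathbf{a_{(n,j)}}$ for $1 \le i \le L_n + 1$ by casework on which block of the period the shift lands in. Shifts landing in the initial $\gamma \kappa^{n-1}$ reduce to Parry's condition applied to the original greedy expansion $d_q(1) = a_1 \gamma \kappa^\omega$; shifts landing in $\tau_j$ reduce, after a careful prefix comparison, to the base-case hypothesis that $\mathbf{a_{(1,j)}}$ is already greedy; shifts landing in $(\kappa^*)^{n-1}\gamma^*$ place reversed substrings of the form $a_r \dots a_2$ at the head and are bounded below the prefix of $\mathbf{a_{(n,j)}}$ by precisely Condition \ref{revgre}; shifts landing in the trailing $00$ begin with $0$ and are trivially smaller than a sequence starting with $1$.

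The main obstacle is the polynomial identity in (ii). It requires simultaneously handling the geometric sums from the $\kappa$-telescoping, the Pisot identity, the reciprocity of $Q$ (which is what glues the $\kappa^{n-1}$ and $(\kappa^*)^{n-1}$ contributions into a factor of $M(x)x^{n\ell+j} + M^*(x)$), and the base case. This is the step where all three hypotheses of the theorem---reversibly greedy $d_q(1)$, reciprocal co-factor $Q$, and the specific base-case form of $\mathbf{a_{(1,j)}}$---are consumed in concert, and the most tractable route appears to be to conjecture the closed form of $C_{n,j}(x)$ from small examples and then verify the identity $R_{n,j}(x) = T_{n\ell+j}(x) \cdot C_{n,j}(x)$ directly by polynomial multiplication.
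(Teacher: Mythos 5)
Your proposal follows essentially the same route as the paper: the paper defines $R_{n\ell+j}(x)=Q(x)\bigl(M(x)x^{n\ell+j}+M^*(x)\bigr)\bigl(1+x^{\ell}+\cdots+x^{\ell n}\bigr)$ — exactly your conjectured cofactor $C_{n,j}(x)$ — and expands it via $P_{k+\ell}(x)=x^{\ell}P_k(x)-S(x)$ and the reciprocity of $Q$ to show it is the companion polynomial of $\mathbf{a_{(n,j)}}$, identifying the middle block with $\tau_j$ by the base-case hypothesis; this is your divisibility identity verified in the opposite direction. The greediness step is likewise identical: Parry's criterion with the same three-way casework (greediness of $d_q(1)$, the assumed greediness of $\mathbf{a_{(1,j)}}$, and Condition \ref{revgre} for shifts landing in the reversed block).
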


\begin{rmk}
    We can write Example \ref{first example} in the terms of this theorem. As the $\beta$-expansion for the Pisot number is given by $11(10)^{\omega}$ we have $\kappa=1$, $\gamma=10$, $\ell=2$ and we find $\tau_1=0110$ and $\tau_2=10001$.
\end{rmk}

\begin{proof}[Proof of Theorem \ref{MAIN}]
    We will show this by proving that the polynomial $R_{n\ell+j}(x)$ defined below is the companion polynomial for the $\beta$-expansion of $1$ in base $\alpha_{n\ell+j}$ under the assumption that $R_{\ell+j}(x)$ is the companion polynomial for the expansion of $1$ in base $\alpha_{\ell+j}$.

    Let $R_{n\ell+j}(x)=Q(x)(M(x)x^{n\ell+j}+M^*(x))(1+x^{\ell}+\cdots+x^{\ell n})$ and let $R(x)$ be the companion polynomial of the expansion of $q$, the original Pisot base. That is $R(x)=M(x)Q(x)$. Let $P_k(x)$ and $P_{k+\ell}(x)$ be the constituent parts of $R(x)$ and define $S(x)$ such that $P_{k+\ell}(x)=x^{\ell}P_k(x)-S(x)$. That is
    \begin{equation}
        R(x)=P_{k+\ell}(x)-P_k(x),\quad R(x)^*=P_{k+\ell}^*(x)-x^{\ell}P_k^*(x).
        \label{T1}
    \end{equation}
    and,
    \begin{equation}
        P_{k+\ell}(x)=x^{\ell}P_k(x)-S(x),\quad P_{k+\ell}^*(x)=P_k^*(x)-x^{k+1}S^*(x).
        \label{T2}
    \end{equation}
    Note that $S(x)$ is considered a degree $\ell-1$ polynomial, in order to calculate $S^*(x)$. Observe that because $Q(x)$ is a reciprocal co-factor we have $R(x)=Q(x)M(x)$ and $R^*(x)=Q(x)M^*(x)$.

    We will first show that $R_{n\ell+j}(x)$ must be the companion polynomial for a $\beta$-expansion of the Salem number $\alpha_{n\ell+j}$. We begin by first rewriting $R_{n\ell+j}(x)$ as two sums involving $R(x)$ and $R^*(x)$ respectively.

    \begin{align}
        R_{n\ell+j}(x) &= (Q(x)M(x)x^{n\ell+j}+Q(x)M^*(x))(1+x^{\ell}+\cdots+x^{\ell n})\\
        &= (R(x)x^{n\ell+j}+R^*(x))(1+x^{\ell}+\cdots+x^{\ell n})\\
        &= \sum_{i=0}^{n}R(x)x^{j+\ell(n+i)} + \sum_{i=0}^{n}R^*(x)x^{\ell i}.
    \end{align}
    
    Since $R(x)$ is the companion polynomial of the beta-expansion under the Pisot base we can separate the summands into the components $P_{k+\ell}(x), P_k(x), P^*_{k+\ell}(x)$ and $P^*_k(x)$. Further splitting the sum using the identities (\ref{T1}) and (\ref{T2}) we begin to see the expected terms coming from the conjectured companion polynomial.
    
    \begin{align}
        R_{n\ell+j}(x) =&\; \left(\sum_{i=0}^{n}P_{k+\ell}(x)x^{j+\ell(n+i)}-P_k(x)x^{j+\ell(n+i)}\right)\nonumber \\
        &+ \left(\sum_{i=0}^{n}P_{k+\ell}^*(x)x^{\ell i}-P_k^*(x)x^{\ell(i+1)}\right)\\
        =&\;\left(P_{k+\ell}(x)x^{j+2n\ell}+\sum_{i=0}^{n-1}(x^{\ell}P_k(x)-S(x))x^{j+\ell(n+i)}-\sum_{i=0}^{n}P_k(x)x^{j+\ell(n+i)}\right)\nonumber\\
        &+ \left(P_{k+\ell}^*(x) + \sum_{i=1}^{n}(P_k^*(x)-x^{k+1}S^*(x))x^{\ell i}-\sum_{i=1}^{n+1}P_k^*(x)x^{\ell i}\right).
    \end{align}
    
    Finally by rearranging the summations we find three terms. The first two which the reciprocal to each other and, the final which is exactly the polynomial with coefficients that are exactly the digits in $\tau_j$. This is by the assumption that $R_{\ell+j}(x)$ is the companion polynomial of $\mathbf{a}_{(1,j)}$.
    
    \begin{align}   
        R_{n\ell+j}(x) =&\;\left(P_{k+\ell}(x)x^{j+2n\ell}-P_k(x)x^{j+\ell n}-\sum_{i=0}^{n-1}S(x)x^{j+\ell(n+i)}\right)\nonumber\\
        & + \left(P_{k+\ell}^*(x)-P_k^*(x)x^{\ell(n+1)} - \sum_{i=1}^{n}S^*(x)x^{\ell i+k+1} \right)\\
        =&\; \left(P_k(x) x^{j+(2n+1)\ell}-\sum_{i=2}^{n}S(x)x^{j+\ell(n+i)}\right) + \left(P_k^*(x) - \sum_{i=0}^{n-2}S^*(x)x^{\ell i+k+1}\right)\nonumber \\
        &-\left((P_k(x)+S(x))x^{j+\ell n}+S(x)x^{j+\ell(n+1)}\right)\nonumber \\
        &+\left(P_k^*(x)x^{\ell (n+1)}+S^*(x)x^{\ell(n-1)+k+1}+S^*(x)x^{\ell n+k+1}\right).
    \end{align}

    Under our assumption we must have the final two terms of the above polynomial are related only to the string of digits $\tau_j$, note that this polynomial has a range of degrees of length $|\tau_j|=3\ell-k-1+j$. The leading term represents the part of the companion polynomial preceding $\tau_j$  in the conjecture and the second term is exactly the part of the companion polynomial referring to the part following $\tau_j$ in the conjectured. We also notice that each of the terms in the above sum have independent monomial terms, hence there is no interaction between the sections.\\

    Therefore $R_{n\ell+j}(x)$ as defined is the companion polynomial of the expansion in question. Noting that $M(x)x^{n\ell+j}+M^*(x)$ divides $R_{n\ell+j}(x)$ we conclude that $\mathbf{a}$ is indeed a valid $\beta$-expansion for 1 in base $\alpha_{\ell n+j}^+$. We also see that the co-factor of the expansion is given by $Q(x)(1+x^{\ell}+\cdots+x^{\ell(n-1)})$. It remains to show it is the greedy expansion.\\

    It suffices to apply Theorem \ref{greedy} to complete the proof. We have $\sigma^i(\mathbf{a_{(n,j)}}) < \mathbf{a_{(n,j)}}$ for $i=1,\dots k+\ell(n-1)$ since $a_1\dots a_k(a_{k+1}\dots a_{k+\ell})^{\omega}$ is a greedy expansion of $q$. Next $\sigma^i(\mathbf{a_{(n,j)}}) < \mathbf{a_{(n,j)}}$ for $i=k+\ell(n-1)+1\dots k+\ell(n-1)+|\tau_j|$ since the relation holds for $\mathbf{a_{(1,j)}}$ by assumption. Finally we see that $\sigma^j(\mathbf{a_{(n,j)}})< \mathbf{a_{(n,j)}}$ for the remaining terms as $a_1\dots a_k(a_{k+1}\dots a_{k+\ell})^{\omega}$ is reversibly greedy.  
\end{proof}

We have shown the theorem to be true; it remains to explore how often the conditions of the theorem actually occur.

\section{The Negative Case}

\subsection{Periodic Case}
We now turn our attention to the negative case and explore the $\beta$-expansions of Salem numbers and when they are eventually periodic. Let $M(x)$ be the minimal polynomial of a Pisot number and consider the Salem number defined by the polynomial $T_m^-(x)=M(x)x^m-M^*(x)$.

\begin{example}
    Consider the Pisot root $q$ of $M(x)=x^6-x^5-x^4-x^2+1$. We calculate the greedy $\beta$-expansions for the Salem numbers defined by $M(x)x^m-M^*(x)$ for certain $m$ in the following table. The Pisot number has a greedy $\beta$-expansion for $1$ given by $d_q(1)=11(0010)^{\omega}$. Let $\kappa = 1$ and $\gamma = 0010$. Note that in general the expansions for $m=4n+1$ and $m=4n+3$ do not appear to be periodic when calculating up to periods of length 100.

    \begin{table}[h]
        \centering
        \begin{tabular}{l|l|l}
            $m$ & Expansion & Expansion\\ \hline
            12 & $1(\kappa\gamma\gamma0\kappa^*00)^{\omega}$ & $1(\kappa\gamma\gamma010010\gamma^*\gamma^*\kappa^*00)^{\omega}$\\
            16 & $1(\kappa\gamma\gamma\gamma0\kappa^*00)^{\omega}$ & $1(\kappa\gamma\gamma\gamma010010\gamma^*\gamma^*\gamma^*\kappa^*00)^{\omega}$\\
            20 & $1(\kappa\gamma\gamma\gamma\gamma0\kappa^*00)^{\omega}$ & $1(\kappa\gamma\gamma\gamma\gamma010010\gamma^*\gamma^*\gamma^*\gamma^*\kappa^*00)^{\omega}$\\ \hline
            6 & $1(\kappa\gamma1\gamma^*\kappa^*00)^{\omega}$\\
            10 & $1(\kappa\gamma\gamma1\gamma^*\gamma^*\kappa^*00)^{\omega}$\\
            14 & $1(\kappa\gamma\gamma\gamma1\gamma^*\gamma^*\gamma^*\kappa^*00)^{\omega}$\\ \hline
        \end{tabular}
        \caption{$\beta$-expansions for Salem numbers related $x^6-x^5-x^4-x^2+1$}
        \label{neg_exp}
    \end{table}

    Unlike the positive case, we can see two different patterns which emerge. We will prove that in the case that an expansion of the above form appears, the pattern will continue indefinitely. Note that here $\kappa$ is the pre-periodic part of the greedy $q$-expansion for $1$ without the leading digit. Notice that it appears that the first type of pattern is of the same form as the second when we do not use minimal period lengths. We will later explain that this is not generally true, it does not follow exactly in this case. We note that as in the positive case, the repeating pattern continues with a period of length equal to the length of the periodic part of the greedy $q$-expansion.
\end{example}

Note that similarly to the positive periodic case, we see that some residues modulo $\ell$ do appear to ever have either of the given patterns.

\begin{theorem}
    Let $q$ be a Pisot number with minimal polynomial $M(x)$. Suppose that $1$ has an infinite reversibly greedy $\beta$-expansion in base $q$, say $a_1a_2\dots a_k(a_{k+1}\dots a_{k+\ell})^{\omega}$ with $\ell$ even. Let $\ell=2p$, $\kappa=a_2\dots a_{k}$ and $\gamma=a_{k+1}\dots a_{k+2p}$.
    
    Furthermore, assume that the companion polynomial of $q$ has co-factor polynomial $Q(x)$, which is reciprocal. Let $\alpha_{2np+j}^-$ be the Salem number that satisfies the polynomial $M(x)x^{2np+j}-M^*(x)$ for some $j\geq 1$.
    
    Suppose that the $\beta$-expansion for $\alpha_{2p+j}^-$ is given by
    \begin{align}
       \mathbf{a_{(1,j)}}&=a_1(a_2\dots a_{k}(a_{k+1}\dots a_{k+2p})\tau_j(a_{k+2p}\dots a_{k+1})a_{k}\dots a_200)^{\omega}\nonumber\\
       &=1(\kappa\gamma\tau_j\gamma^*\kappa^*00)^{\omega},
    \end{align}

    for some string $\tau_j$ of length $j-k+p-1$.

    Then for $n\geq 1$ the $\beta$-expansion for $\alpha_{2np+j}^-$ is given by
    \begin{align}
        \mathbf{a_{(n,j)}}&=a_1(a_2\dots a_{k}(a_{k+1}\dots a_{k+2p})^{n}\tau_j(a_{k+2p}\dots a_{k+1})^{n}a_{k}\dots a_200)^{\omega}\nonumber\\
        &=1(\kappa\underbrace{\gamma\gamma\dots\gamma}_{n}\tau_j\underbrace{\gamma^*\gamma^*\dots\gamma^*}_{n}\kappa^*00)^{\omega}.
    \end{align}
    \label{MAIN NEG INF}
\end{theorem}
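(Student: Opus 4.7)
The plan is to mirror the proof of Theorem \ref{MAIN} from the positive case. I would define the candidate companion polynomial
\[
R_{2np+j}(x) = Q(x)(M(x)x^{2np+j} - M^*(x)) F_n(x),
\]
where the factor $F_n(x)$, of degree $p(2n-1)$, is chosen so that the resulting polynomial has the degree of the companion polynomial of the conjectured expansion $\mathbf{a_{(n,j)}}$. Because $Q(x)$ is reciprocal we have $R(x)=Q(x)M(x)$ and $R^*(x)=Q(x)M^*(x)$, so $R_{2np+j}(x) = (R(x)x^{2np+j} - R^*(x)) F_n(x)$. The structural requirement on $F_n$ is that it telescope when $n$ advances, inserting exactly one more copy of $\gamma$ to the left of $\tau_j$ and one more copy of $\gamma^*$ to the right; a natural ansatz is $F_n(x) = F_1(x) \sum_{i=0}^{n-1} x^{2pi}$, with $F_1(x)$ of degree $p$ pinned down by the base-case expansion $\mathbf{a_{(1,j)}}$.

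Next, I would expand $R_{2np+j}(x)$ using the same Pisot identities as in the positive proof, namely
\[
R(x) = P_{k+\ell}(x) - P_k(x), \qquad R^*(x) = P^*_{k+\ell}(x) - x^\ell P^*_k(x),
\]
together with $P_{k+\ell}(x) = x^\ell P_k(x) - S(x)$ and $P^*_{k+\ell}(x) = P^*_k(x) - x^{k+1} S^*(x)$. After regrouping by degree, the result should split into three disjoint blocks on the exponent range: a top block whose coefficients spell $1\kappa\gamma^n$, a mirror bottom block spelling $(\gamma^*)^n\kappa^*00$, and a central block of length $|\tau_j| = j-k+p-1$. The sum $\sum_{i=0}^{n-1} x^{2pi}$ is precisely what lays down $n$ disjoint shifts of $\gamma$ (respectively $\gamma^*$) in the outer blocks, by the same telescoping mechanism as in the positive case. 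For the central block, I would invoke the base-case hypothesis that $R_{2p+j}(x)$ is the companion polynomial of $\mathbf{a_{(1,j)}}$ to identify the coefficients there as exactly $\tau_j$. Because $M(x)x^{2np+j} - M^*(x)$ divides $R_{2np+j}(x)$ by construction, $\alpha_{2np+j}^-$ is a root, so $\mathbf{a_{(n,j)}}$ is a valid $\beta$-expansion of $1$ in base $\alpha_{2np+j}^-$.

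To upgrade this to the greedy expansion I would apply Theorem \ref{greedy}. The shift--lex check splits into three regimes. Shifts landing inside $\kappa\gamma^n$ coincide for a long enough initial segment with shifts of the Pisot expansion $a_1\cdots a_k(a_{k+1}\cdots a_{k+\ell})^\omega$, so they are controlled by the greediness of $d_q(1)$. Shifts landing inside $\tau_j$ are controlled by the base-case assumption that $\mathbf{a_{(1,j)}}$ is already greedy. Shifts landing inside the reversed tail $(\gamma^*)^n\kappa^*00$ are controlled by the reversibly greedy hypothesis on $d_q(1)$, which is designed exactly so that reversed substrings of the Pisot period are lex-dominated by the expansion itself.

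The main obstacle will be the coefficient-matching step. In the positive case every additive contribution came with the same sign, so consecutive shifts of $\gamma$ simply accumulate without interaction. Here the minus sign in $M(x)x^{2np+j} - M^*(x)$ produces cancellations between the contributions of $R(x)x^{2np+j}$ and those of $R^*(x)$, and these cancellations are most delicate at the seams between $\gamma^n$ and $\tau_j$, and between $\tau_j$ and $(\gamma^*)^n$. Verifying that the cancellations collapse cleanly to $\{0,1\}$-coefficients reproducing the block structure $1\kappa\gamma^n\tau_j(\gamma^*)^n\kappa^*00$ is where the bulk of the work sits. The evenness assumption $\ell=2p$ is essential to this picture, since it is exactly what allows $F_n$ to be built from $x^{2p}$-shifts of $F_1$; it is also the reason that only certain residue classes of $m\pmod{\ell}$ produce periodic expansions of the conjectured form, as observed just before the theorem statement.
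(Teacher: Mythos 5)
Your high-level strategy (a candidate companion polynomial of the form $Q(x)(M(x)x^{2np+j}-M^*(x))F_n(x)$, then Theorem \ref{greedy} split into the three regimes you describe) is the same as the paper's, and your greedy-verification outline is fine. The genuine gap is the choice of $F_n$, which is the substance of the proof and which your ansatz gets wrong. You transplant the positive-case structure and posit $F_n(x)=F_1(x)\sum_{i=0}^{n-1}x^{2pi}$; the factor that actually works is
\[
F_n(x)=\frac{x^{2np+p}+1}{x^{2p}-1}=\frac{1-x^{p}+x^{2p}-\cdots+x^{2np}}{x^{p}-1},
\]
and this is not of your form: that would force $x^{(2n+1)p}+1=F_1(x)(x^{2np}-1)$ with $F_1$ independent of $n$, which is impossible (already $x=1$ rules it out). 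Structurally, the paper's decomposition
\[
R_{2np+j}(x)=x^{2np+j+p}R(x)\sum_{i=0}^{n-1}x^{2pi}+R^*(x)\sum_{i=0}^{n-1}x^{2pi}+\frac{R(x)x^{2np+j}-R^*(x)x^{2np}}{x^{p}-1}
\]
attaches \emph{different} multipliers to $R(x)x^{2np+j}$ (an extra shift by $x^{p}$) and to $-R^*(x)$; a single factor $F_1(x)\sum x^{2pi}$ forces the same multiplier on both, and the seam cancellations you defer to "the main obstacle" then genuinely fail. Concretely, in the paper's own example $M(x)=x^6-x^5-x^4-x^2+1$ ($k=2$, $p=2$), take $j=2$, $n=2$: the factor $\frac{x^{10}+1}{x^{4}-1}$ reproduces exactly the companion polynomial of $1(\kappa\gamma\gamma1\gamma^*\gamma^*\kappa^*00)^{\omega}$, whereas your factor $\frac{x^{6}+1}{x^{4}-1}(1+x^{4})$ differs from it by $(M(x)x^{10}-M^*(x))\frac{x^{4}}{x^{2}-1}$ and yields a coefficient $-2$ (e.g.\ at $x^{17}$), which cannot occur in the companion polynomial of a $\{0,1\}$-expansion of the conjectured shape. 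Relatedly, the role you assign to the evenness hypothesis is off: $\ell=2p$ is not what lets $F_n$ be built from $x^{2p}$-shifts of $F_1$, it is what makes $\frac{x^{2np+p}+1}{x^{2p}-1}$ reduce to a fraction with denominator $x^{p}-1$ so that $R_{2np+j}(x)$ can be an integer polynomial at all.

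A secondary omission: with the correct factor (and in fact with yours too, once $F_1$ is pinned down by the base case) the auxiliary factor is not a polynomial, so you must prove $R_{2np+j}(x)\in\mathbb{Z}[x]$. In the paper this integrality drops out of the displayed decomposition together with the base-case hypothesis, which guarantees that $\frac{R(x)x^{j}-R^*(x)}{x^{p}-1}$ is integral; your proposal treats $F_1$ as a degree-$p$ polynomial "pinned down by the base case," which silently skips this step.
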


\begin{proof}
    Consider the following polynomial;
    \begin{equation}
        R_{2np+j}(x)=Q(x)(M(x)x^{2np+j}-M^*(x))\left(\frac{x^{2np+p}+1}{x^{2p}-1}\right).
        \label{R(x)Neg1.1}
    \end{equation}
    We show that if $R_{2p+j}(x)$ is the companion polynomial to the $\beta$-expansion of $\alpha_{2p+j}^-$ then $R_{2np+j}(x)$ must be the companion polynomial for the $\beta$-expansion of $\alpha_{2np+j}^-$. In doing so, we must also prove that $R_{2np+j}(x)$ is an integer polynomial. Note that if $\ell=2p$ were not even then this would not be an integer polynomial in general.

    We need to show that the minimal polynomial of $\alpha_{2np+j}^-$, which divides $M(x)x^{2np+j}-M^*(x)$, also divides $R_{2np+j}(x)$. And that it is the companion polynomial of the representation in the statement of the theorem. Toward this conclusion, let $R(x)=Q(x)M(x)$ be the companion polynomial of the $\beta$-expansion for $1$ under base $q$, the Pisot number in question.

    First note that 
    \begin{equation}
       \frac{x^{2np+p}+1}{x^{2p}-1}=\frac{1-x^{p}+x^{2p}-x^{3p}+x^{4p}-\dots-x^{2np-p}+x^{2np}}{x^{p}-1}. 
    \end{equation}
    This rational function has a numerator which can be written as,
    \begin{equation}
        f(x)=(x^{p}-1)(1+x^{2p}+\dots+x^{2(n-1)p})(x^{p}-1)+g(x)
    \end{equation}
    where,
    \begin{equation}
        g(x)=x^{p}-x^{2p}+x^{3p}-x^{4p}+\dots+x^{2np-p}.
    \end{equation}

    Then using this representation we have,
    
    \begin{align}
        R_{2np+j}(x)=& \;(R(x)x^{2np+j}-R^*(x))\left(\frac{x^{2np+p}+1}{x^{2p}-1}\right) \label{R(x)Neg1.2}\\
        =& \; x^{2np+j+p}R(x)(1+x^{2p}+\dots+x^{2(n-1)p})+R^*(x)(1+x^{2p}+\dots+x^{2(n-1)p})\nonumber\\
        & -x^{2np+j}R(x)(1+x^{2p}+\dots+x^{2(n-1)p})-x^{p}R^*(x)(1+x^{2p}+\dots+x^{2(n-1)p})\nonumber\\
        & +(R(x)x^{2np+j}-R^*(x))\left(\frac{g(x)}{x^{p}-1}\right). \label{R(x)Neg1.3}
    \end{align}

    We will now focus on the final term of the polynomial in this form, that is, 
    \begin{equation}
        h(x)=(R(x)x^{2np+j}-R^*(x))\left(\frac{g(x)}{x^{p}-1}\right)
        \label{h(x)Neg1.1}
    \end{equation}
    We can write $g(x)$ in the following two forms,
    \begin{align}
        g(x)&=(x^{p}-1)(1+x^{2p}+\dots+x^{2(n-1)p})+\frac{1}{x^{p}-1}\\
            &=-x^{p}(x^{p}-1)(1+x^{2p}+\dots+x^{2(n-1)p})+\frac{x^{2np}}{x^{p}-1}.
    \end{align}

    Using the appropriate representation for $g(x)$ as we expand $h(x)$ we find,
    \begin{align}
        h(x) =&\; (R(x)x^{2np+j}-R^*(x))\left(\frac{g(x)}{x^{p}-1}\right)\\
        =&\;x^{2np+j}R(x)(1+x^{2p}+\dots+x^{2(n-1)p})+\frac{R(x)x^{2np+j}}{x^{p}-1}\nonumber\\
        &+x^{p}R^*(x)(1+x^{2p}+\dots+x^{2(n-1)p})-\frac{R^*(x)x^{2np}}{x^{p}-1}.
    \end{align}

    Upon substitution of $h(x)$ back into formula \ref{R(x)Neg1.3} for $R_{2np+j}(x)$ we find;
 
    \begin{align}
        R_{2np+j}(x) =&\; x^{2np+j+p}R(x)(1+x^{2p}+\dots+x^{2(n-1)p})+R^*(x)(1+x^{2p}+\dots+x^{2(n-1)p})\nonumber\\
        & -x^{2np+j}R(x)(1+x^{2p}+\dots+x^{2(n-1)p})-x^{p}R^*(x)(1+x^{2p}+\dots+x^{2(n-1)p})\nonumber\\
        & +x^{2np+j}R(x)(1+x^{2p}+\dots+x^{2(n-1)p})+ x^{p}R^*(x)(1+x^{2p}+\dots+x^{2(n-1)p})\nonumber\\
        & + \frac{R(x)x^{2np+j}-R^*(x)x^{2np}}{x^{p}-1}\\
        =&\; x^{2np+j+p}R(x)(1+x^{2p}+\dots+x^{2(n-1)p})+R^*(x)(1+x^{2p}+\dots+x^{2(n-1)p})\nonumber\\
        & + \frac{R(x)x^{2np+j}-R^*(x)x^{2np}}{x^{p}-1}.
    \end{align}

    We know that this is an integer polynomial as $\frac{R(x)x^{j}-R^*(x)x}{x^{p}-1}$ is also an integer polynomial by assumption. Hence, $R_{2np+j}(x)$ must be the companion polynomial for a representation of $1$ under the base $\alpha_{2np+j}^-$. This is exactly the companion polynomial for the string $\mathbf{a_{(n,j)}}$, it remains to show that it is the greedy expansion. 
    
    It suffices to apply Theorem \ref{greedy} to complete the proof. The result follows in the same fashion as Theorem \ref{MAIN}.
\end{proof}

\begin{theorem}
    Let $q$ be a Pisot number with minimal polynomial $M(x)$. Suppose that $1$ has an infinite reversibly greedy $\beta$-expansion in base $q$, say $a_1a_2\dots a_k(a_{k+1}\dots a_{k+\ell})^{\omega}$. Let $\kappa=a_2\dots a_{k}$ and $\gamma=a_{k+1}\dots a_{k+\ell}$
    
    Furthermore, assume that the companion polynomial of $q$ has co-factor polynomial $Q(x)$, which is reciprocal. Let $\alpha_{n\ell+j}^-$ be the Salem number that satisfies the polynomial $M(x)x^{n\ell+j}-M^*(x)$ for some $j\geq 1$.
    
    Suppose that the $\beta$-expansion for $\alpha_{\ell+j}^-$ is given by
    $$\mathbf{a_{(1,j)}}=a_1(a_2\dots a_k a_{k+1}\dots a_{k+\ell}\lambda_j a_{k}\dots a_{2}00 )^{\omega}=1(\kappa\gamma\lambda_j\kappa^*00)^{\omega},$$
    for some string $\lambda_j$ of length $j-k-1$,

    Then for $n\geq 1$ the $\beta$-expansion for $\alpha_{n\ell+j}^-$ is given by
    $$\mathbf{a_{(n,j)}}=a_1(a_2\dots a_k (a_{k+1}\dots a_{k+\ell})^{n}\lambda_j a_{k}\dots a_{2}00 )^{\omega}=1(\kappa\gamma^{n}\lambda_j\kappa^*00)^{\omega}.$$
    \label{MAIN NEG INF2}
\end{theorem}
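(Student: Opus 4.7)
The proof will follow the template of Theorem \ref{MAIN NEG INF}: define a candidate companion polynomial, show it is a polynomial divisible by the minimal polynomial of $\alpha_{n\ell+j}^-$, identify it with the companion polynomial of the conjectured expansion, and verify greediness via Parry's theorem. Because the theorem places no parity restriction on $\ell$, the appropriate multiplier is simpler than in the previous theorem, and I would set
\[
R_{n\ell+j}(x) := \frac{Q(x)\bigl(M(x)x^{n\ell+j}-M^*(x)\bigr)}{x^\ell-1} = \frac{R(x)x^{n\ell+j}-R^*(x)}{x^\ell-1},
\]
using $Q(x)M(x)=R(x)$ and, by reciprocity of $Q$, $Q(x)M^*(x)=R^*(x)$.

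To see $R_{n\ell+j}(x)$ is an integer polynomial, subtract $x^{(n-1)\ell}$ times the base case relation $R_{\ell+j}(x)(x^\ell-1) = R(x)x^{\ell+j}-R^*(x)$ from the analogous identity at level $n$; the difference $R^*(x)\bigl(x^{(n-1)\ell}-1\bigr)$ is divisible by $x^\ell-1$, yielding
\[
R_{n\ell+j}(x) = x^{(n-1)\ell}R_{\ell+j}(x) + R^*(x)\bigl(1+x^\ell+\cdots+x^{(n-2)\ell}\bigr),
\]
a polynomial by the base case hypothesis. This rearranges to the one-step recursion $R_{n\ell+j}(x) = x^\ell R_{(n-1)\ell+j}(x) + R^*(x)$ for $n\ge 2$. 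The minimal polynomial of $\alpha_{n\ell+j}^-$ divides $M(x)x^{n\ell+j}-M^*(x)$, and as the minimal polynomial of a Salem number is coprime with $x^\ell-1$ (no Salem number is a root of unity), it divides $R_{n\ell+j}(x)$.

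The main step is to identify $R_{n\ell+j}(x)$ with the companion polynomial of $\mathbf{a_{(n,j)}}$, which I would do by induction on $n$ using the one-step recursion. Above degree $k+\ell$, the coefficients of $x^\ell R_{(n-1)\ell+j}(x)$ already agree with those of the conjectured expansion, since the initial block $\kappa\gamma^{n-1}$ is common to $\mathbf{a_{((n-1),j)}}$ and $\mathbf{a_{(n,j)}}$ and the additional copy of $\gamma$ is spliced into the interior of the period. Below degree $k+\ell$, the reciprocal polynomial $R^*(x)$, whose coefficients encode the reversed Pisot digits, supplies precisely the correction needed to insert the extra $\gamma$-block while preserving the $\lambda_j\kappa^*00$ tail; the bookkeeping can be made explicit via the identities $R(x)=(x^\ell-1)P_k(x)-S(x)$ and $R^*(x)=(1-x^\ell)P_k^*(x)-x^{k+1}S^*(x)$, exactly parallel to the derivation in Theorem \ref{MAIN NEG INF}.

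Greediness of $\mathbf{a_{(n,j)}}$ then follows from Theorem \ref{greedy} via the now-familiar three-way split: shifts starting inside $\kappa\gamma^n$ are lex-less by the greediness of $d_q(1)$, shifts starting inside $\lambda_j\kappa^*00$ reduce to the $n=1$ case by the base case hypothesis, and shifts crossing period boundaries are handled by the reversibly greedy condition (Condition \ref{revgre}) on $d_q(1)$. The principal technical obstacle is the coefficient matching in the range $[0,k+\ell]$, where both summands of the recursion contribute nontrivially and one must track the reversed-digit structure of $R^*(x)$ against the tail of $x^\ell R_{(n-1)\ell+j}(x)$; in particular, the success of the matching reflects a consistency condition on $\lambda_j$ that is a consequence of the assumed greediness of $\mathbf{a_{(1,j)}}$.
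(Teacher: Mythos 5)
Your proposal is correct and follows essentially the same route as the paper: the same candidate companion polynomial $R_{n\ell+j}(x)=Q(x)\bigl(M(x)x^{n\ell+j}-M^*(x)\bigr)/(x^\ell-1)$, integrality and the digit identification pulled from the assumed $n=1$ case, and greediness via Theorem \ref{greedy} together with the reversibly greedy condition. Your one-step recursion $R_{n\ell+j}(x)=x^\ell R_{(n-1)\ell+j}(x)+R^*(x)$ is just a rearrangement of the paper's decomposition $R_{n\ell+j}(x)=R(x)x^j\bigl(1+x^\ell+\cdots+x^{(n-1)\ell}\bigr)+A(x)$ with $A(x)=\bigl(R(x)x^j-R^*(x)\bigr)/(x^\ell-1)$ independent of $n$, so the two arguments coincide in substance.
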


\begin{proof}
    Let $R_{n\ell+j}(x)=Q(x)(M(x)x^{n\ell+j}-M^*(x))/(x^{\ell}-1)$ we show that if $R_{\ell+j}(x)$ is the companion polynomial to the $\beta$-expansion of $\alpha_{\ell+j}$ then $R_{n\ell+j}(x)$ must be the companion polynomial for the $\beta$-expansion of $\alpha_{n\ell+j}$.

    \begin{align}
        R_{n\ell+j}(x) &= (R(x)x^{n\ell+j}-R^*(x))/(x^{\ell}-1)\label{R(x)Neg2.1}\\
        &=\frac{R(x)x^j(x^{n\ell}-1)}{x^{\ell}-1}+\frac{R(x)x^j-R^*(x)}{x^{\ell}-1}\\
        &= R(x)x^j(1+x^{\ell}+\cdots+x^{(n-1)\ell})+A(x).
    \end{align}

    We look now specifically at the polynomial $A(x)$ which is not dependent on $n$, this is the polynomial whose coefficients will determine the string $\lambda_j\kappa^*00$ in the representation.
    \begin{align}
        A(x) &=\frac{R(x)x^j-R^*(x)}{x^{\ell}-1}\\
             &=\frac{x^jR(x)-x^{\ell}R^*(x)}{x^{\ell}-1}+\frac{x^{\ell}R^*(x)-R^*(x)}{x^{\ell}-1}\\
             &= \frac{x^jR(x)-x^{\ell}R^*(x)}{x^{\ell}-1} + R^*(x)\\
             &= \frac{x^jR(x)-x^{\ell}R^*(x)}{x^{\ell}-1} -x^{k+1}S^*(x)-x^{\ell}P_k^*(x)+ P_k^*(x).
    \end{align}

    We see that $A(x)$ is a polynomial by assumption and that $R_{n\ell+j}(x)$ has the desired form, where the coefficients of $A(x)$ up to $x^j$ exactly determine $\lambda_j\kappa^*00$.

    We finish by applying Theorem \ref{greedy} and noting that $a_1a_2\dots a_k(a_{k+1}\dots a_{k+\ell})^{\omega}$ is reversibly greedy to ensure that the require property holds.
\end{proof}

We see that Theorems \ref{MAIN NEG INF} and \ref{MAIN NEG INF2} prove that both patterns we saw in Example 3.1 are not coincidences and can be found, in general.

\begin{rmk}
    If we try to use a period of non-minimal length with an expansion in the form of Theorem \ref{MAIN NEG INF2} then we require an extra factor of $(x^{n\ell+2k+|\lambda_j|}+1)$ in our companion polynomial. This is because $x^{n\ell+2k+|\lambda_j|}+1$ is the length of the period. 
    
    Returning to Theorem \ref{MAIN NEG INF}, we see an extra factor of $(x^{n\ell+\ell/2}+1)$ in Equation \ref{R(x)Neg1.1}. Then we can see that in the case where $2k+|\lambda_j|\neq \ell/2$ these factors are not equal, and hence we cannot apply Theorem \ref{MAIN NEG INF}.
\end{rmk}

\subsection{Finite Case}
We consider the case where the greedy $\beta$-expansion under the Pisot base is finite. Let $q$ be the Pisot root of $M(x)$, and let the greedy $q$-expansion for $1$ be given by $a_1\dots a_k$.

\begin{example}
    Let $M(x)=x^3-2x^2+x-1=\Phi_2(x)$. We calculate the $\beta$-expansions for the Salem numbers defined by $M(x)x^m-M^*(x)$ for certain $m$ in Table \ref{neg_exp}. Note that the $\beta$-expansion for the Pisot number is given by the string $1101(0)^{\omega}$ and we have $k=4$. We let $\gamma=1100$ and $\kappa=100$. Notice that $(1100)^{\omega}=1\kappa\gamma^{\omega}=\gamma^{\omega}$ is exactly the quasi-greedy expansion. We will explore why this is the case in Remark \ref{finite quasi}. Table \ref{neg_exp} gives the greedy expansions for 1 under the Salem root of $M(x)x^m-M^*(x)$ for certain values of $m$. Note that the expansions for $m=4n+2$ do not appear to be periodic when calculating up periods of length 100.

    \begin{table}[h]
        \centering
        \begin{tabular}{l|l|l}
            $m$ & Expansion & Expansion\\ \hline
            5 & $1(1000000100)^{\omega}$ & $1(\kappa00\kappa^*00)^{\omega}$\\
            9 & $1(100110000001100100)^{\omega}$ & $1(\kappa\gamma00\gamma^*\kappa^*00)^{\omega}$\\
            13 & $1(10011001100000011001100100)^{\omega}$ & $1(\kappa\gamma\gamma00\gamma^*\gamma^*\kappa^*00)^{\omega}$\\ \hline
            7 & $1(100)^{\omega}=1(100100)^{\omega}$ & $1(\kappa)^{\omega}=1(\kappa100)^{\omega}$\\
            11 & $1(1001100100)^{\omega}$ & $1(\kappa\gamma100)^{\omega}$\\
            15 & $1(10011001100100)^{\omega}$ & $1(\kappa\gamma\gamma100)^{\omega}$\\ \hline
            8 & $1(1001010100100)^{\omega}$ & $1(\kappa10101\kappa^*00)^{\omega}$\\
            12 & $1(100110010101001100100)^{\omega}$ & $1(\kappa\gamma10101\gamma^*\kappa^*00)^{\omega}$\\
            16 & $1(10011001100101010011001100100)^{\omega}$ & $1(\kappa\gamma\gamma10101\gamma^*\gamma^*\kappa^*00)^{\omega}$\\
        \end{tabular}
        \caption{$\beta$-expansions for Salem numbers related to $\Phi_2$}
        \label{neg_exp}
    \end{table}

    Notice that as in the positive periodic case we have what looks to be a repeating cycle of fixed period, in this case the period is $4$ which happens to be the length of the $\beta$-expansion for the Pisot number. Note also that this is the length of the periodic part of the quasi-greedy expansion. Notice that by considering these expansions with non-minimal length we have that $n=7, 11, 15$ appear to have the same form as the other two sets as shown in Table \ref{n=7}. We will later show that this is not always the case.

    \begin{table}[h]
        \centering
        \begin{tabular}{l|l|l}
            $m$ & Expansion & Expansion\\ \hline
            7 & $1(100100100100)^{\omega}$ & $1(\kappa1001\kappa^*00)^{\omega}$\\
            11 & $1(10011001001001100100)^{\omega}$ & $1(\kappa\gamma1001\gamma^*\kappa^*00)^{\omega}$\\
            15 & $1(1001100110010010011001100100)^{\omega}$ & $1(\kappa\gamma\gamma1001\gamma^*\gamma^*\kappa^*00)^{\omega}$\\
        \end{tabular}
        \caption{$\beta$-expansions for Salem numbers related to $\Phi_2$}
        \label{n=7}
    \end{table}
    \label{negative exmp}
\end{example}

\begin{rmk}
    Before proceeding we note that in the above example the quasi-greedy expansion for $1$ under the Pisot base is given by $1100(1100)^{\omega}$ or $(1100)^{\omega}$. Applying Theorem \ref{MAIN NEG INF} and Theorem \ref{MAIN NEG INF2} to $1100(1100)^{\omega}$ yields the expansions in the above tables. We will later remark that this is not a coincidence. It follows from the fact that the quasi-greedy expansion which is constructed from a reversibly greedy expansion continues to satisfy Condition \ref{revgre}.
    \label{finite quasi}
\end{rmk}

We begin by considering the case where the minimal greedy expansion has a symmetric periodic part when we ignore the trailing zeros. In the above example, these are the values $m=4n+1$ and $m=4n+4$.

\begin{theorem}
    Let $q$ be a Pisot number with minimal polynomial $M(x)$. Let $q$ have a finite reversibly greedy $\beta$-expansion, say $a_1a_2\dots a_k$ with $k$ even. Let $k=2p$, $\kappa=a_2\dots a_{2p-1}(a_{2p}-1)$ and $\gamma=a_1\dots a_{2p-1}(a_{2p}-1)$. 
    
    Furthermore, assume that the companion polynomial of $q$ has co-factor polynomial $Q(x)$, which is reciprocal. Let $\alpha_{2np+j}^-$ be the Salem number that satisfies the polynomial $M(x)x^{2np+j}-M^*(x)$ for some $j\geq 1$. 
    
    Suppose that the $\beta$-expansion for 1 in base $\alpha_{2p+j}^-$ is given by
    \begin{align}
        \mathbf{a_{(1,j)}} &=a_1(a_2\dots a_{2p-1}(a_{2p}-1)\tau_j(a_{2p}-1)a_{2p-1}\dots a_200)^{\omega},\\
        &=1(\kappa\tau_j\kappa^*00)^{\omega}
    \end{align}

    For some string $\tau_j$ of length $j-1+p$.

    Then for $n\geq 1$ the $\beta$-expansion for 1 in base $\alpha_{2np+j}^-$ is given by
    \begin{align}
        \mathbf{a_{(n,j)}}=&\;a_1(a_2\dots a_{2p-1}(a_{2p}-1)(a_1\dots a_{2p-1}(a_{2p}-1))^{n-1}\tau_j\nonumber\\
        &\;((a_{2p}-1)a_{2p-1}\dots a_1)^{n-1}(a_{2p}-1)a_{2p-1}\dots a_200)^{\omega},\\
        =&\;\mathbf{a_{(n,j)}}=1(\kappa\underbrace{\gamma\gamma\dots\gamma}_{n-1}\tau_j\underbrace{\gamma^*\gamma^*\dots\gamma^*}_{n-1}\kappa^*00)^{\omega}.
    \end{align}

    \label{MAIN NEG}
\end{theorem}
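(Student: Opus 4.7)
The plan is to imitate the proof of Theorem \ref{MAIN NEG INF} almost verbatim, replacing the eventually-periodic companion polynomial $R(x) = P_{k+\ell}(x) - P_k(x)$ that appears there by the finite companion polynomial $\tilde R(x) := \tilde P_{2p}(x) = M(x)Q(x)$ of the Pisot expansion. Since $Q(x)$ is reciprocal by hypothesis, we still have $\tilde R^*(x) = M^*(x)Q(x)$, and this is essentially the only algebraic fact about the Pisot side that the previous proof really exploited.

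First, I would introduce the candidate companion polynomial
\[
R_{2np+j}(x) \;=\; Q(x)\bigl(M(x)x^{2np+j}-M^*(x)\bigr)\,\frac{x^{2np+p}+1}{x^{2p}-1} \;=\; \bigl(\tilde R(x)x^{2np+j}-\tilde R^*(x)\bigr)\,\frac{x^{2np+p}+1}{x^{2p}-1},
\]
and observe that $M(x)x^{2np+j}-M^*(x)$, and therefore the minimal polynomial of $\alpha_{2np+j}^-$, divides it by construction. A degree count confirms that $R_{2np+j}(x)$ has degree $4np+p+j$, matching what is expected of the companion polynomial $P_{1+L}(x)-P_1(x)$ of $\mathbf{a}_{(n,j)}$, where $L = 4np+p+j-1$ is the period length of the conjectured expansion.

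Next, I would expand $R_{2np+j}(x)$ by applying the same rational decomposition of $(x^{2np+p}+1)/(x^{2p}-1)$ as a polynomial part plus a fractional part with denominator $x^p-1$ that was used in the proof of Theorem \ref{MAIN NEG INF}. Multiplying the polynomial part against $\tilde R(x)x^{2np+j}$ produces the high-degree block of coefficients matching $1\kappa\gamma^{n-1}$, and against $-\tilde R^*(x)$ produces the low-degree block matching $(\gamma^*)^{n-1}\kappa^*00$. Multiplying the fractional part yields, after a shift, the companion polynomial of the base case $\mathbf{a}_{(1,j)}$, which by the $n=1$ hypothesis is an integer polynomial whose coefficients encode $\tau_j$. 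Putting these together, and checking that the three blocks occupy disjoint degree ranges, shows that $R_{2np+j}(x)$ is an integer polynomial whose coefficients encode $\mathbf{a}_{(n,j)}$ exactly. To conclude greediness I would then apply Parry's theorem (Theorem \ref{greedy}): the inequalities $\sigma^i(\mathbf{a}_{(n,j)}) <_{\text{lex}} \mathbf{a}_{(n,j)}$ split into two regimes, where shifts landing in the $\kappa\gamma^{n-1}$ or $(\gamma^*)^{n-1}\kappa^*$ segments are controlled by the reversibly greedy property of $a_1\dots a_{2p}$, which by Proposition \ref{quasi} carries over to the quasi-greedy form $(b_1\dots b_{2p})^\omega = \gamma^\omega$, while shifts landing inside $\tau_j$ inherit the inequality directly from the base-case hypothesis.

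The main obstacle I expect is the bookkeeping in the second step. The discrepancy between the exponent $\gamma^n$ appearing in Theorem \ref{MAIN NEG INF} and the $\gamma^{n-1}$ of the present statement traces back to the fact that $\tilde R$ has degree $2p$ rather than being presented as $P_{k+\ell}-P_k$; in effect, one copy of $\gamma$ is absorbed into the leading $\tilde R(x)x^{2np+j}$ term. Confirming that the coefficient sequence produced by this leading term really is $1\kappa\gamma^{n-1}$ starting at the correct position, and that it meets the fractional-part contribution producing $\tau_j$ and the reciprocal contribution producing $(\gamma^*)^{n-1}\kappa^*00$ without overlap or gap, is where the proof becomes delicate.
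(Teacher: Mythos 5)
Your proposal follows essentially the same route as the paper: the same candidate polynomial $Q(x)\bigl(M(x)x^{2np+j}-M^*(x)\bigr)\frac{x^{2np+p}+1}{x^{2p}-1}$, the same decomposition into a polynomial part (producing the $\kappa\gamma^{n-1}$ and $(\gamma^*)^{n-1}\kappa^*00$ blocks) plus an $n$-independent fractional part with denominator $x^p-1$ whose integrality and $\tau_j$-encoding are supplied by the $n=1$ hypothesis, followed by Parry's theorem together with the reversibly greedy condition and the base case for greediness. The only caveat is descriptive: the blocks are not on strictly disjoint degree ranges --- successive copies of $R(x)$ and the symmetric middle term $B(x)=(R(x)x^{j}-R^*(x))/(x^{p}-1)$ overlap in single coefficients, which is exactly what converts $a_{2p}$ into $a_{2p}-1$, a point the paper states explicitly and you correctly identify as the delicate bookkeeping step.
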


\begin{proof}
    Let $R_{2np+j}(x)=Q(x)(M(x)x^{2np+j}-M^*(x))\left(\frac{x^{2np+p}+1}{x^{2p}-1}\right)$ we will show that if $R_{2p+j}(x)$ is the companion polynomial to the $\beta$-expansion of $\alpha_{2p+j}^-$ then $R_{2np+j}(x)$ must be the companion polynomial for the $\beta$-expansion of $\alpha_{2np+j}^-$.

    We need to show that the minimal polynomial of $\alpha_{2np+j}^-$, which divides $M(x)x^{2np+j}-M^*(x)$, also divides $R_{2np+j}(x)$. We also need to show it is the companion polynomial of the representation in the statement of the theorem. Towards this conclusion, let $R(x)=Q(x)M(x)$ be the companion polynomial of the greedy $\beta$-expansion for $1$ under base $q$, the Pisot number in question.

    First note that 
    \begin{equation}
        \frac{x^{2np+p}+1}{x^{2p}-1}=\frac{1-x^{p}+x^{2p}-x^{3p}+x^{4p}-\dots-x^{2np-p}+x^{2np}}{x^{p}-1}.
    \end{equation}
    This rational function has a numerator which can be written as,
    \begin{equation}
      f(x)=(x^{p}-1)(1+x^{2p}+\dots+x^{2(n-1)p})(x^{p}-1)+g(x)  
    \end{equation}
    where,
    \begin{equation}
        g(x)=x^{p}-x^{2p}+x^{3p}-x^{4p}+\dots+x^{2np-p}.
    \end{equation}

    Then using this representation we have,
    \begin{align}
        R_{2np+j}(x)=&\; (R(x)x^{2np+j}-R^*(x))\left(\frac{x^{2np+p}+1}{x^{2p}-1}\right)\\
        = &\; x^{2np+j+p}R(x)(1+x^{2p}+\dots+x^{2(n-1)p})+R^*(x)(1+x^{2p}+\dots+x^{2(n-1)p})\nonumber\\
        & -x^{2np+j}R(x)(1+x^{2p}+\dots+x^{2(n-1)p})-x^{p}R^*(x)(1+x^{2p}+\dots+x^{2(n-1)p})\nonumber\\
        & +(R(x)x^{2np+j}-R^*(x))\left(\frac{g(x)}{x^{p}-1}\right).\label{R(x)Neg2.2}
    \end{align}

    We will now focus on the final term of the polynomial in this form, that is $h(x)=(R(x)x^{2np+j}-R^*(x))\left(\frac{g(x)}{x^{p}-1}\right)$.
    We can write $g(x)$ in the following two forms,
    \begin{align}
        g(x) & = (x^{p}-1)(1+x^{2p}+\dots+x^{2(n-1)p})+\frac{1}{x^{p}-1}\\
        & = -x^{p}(x^{p}-1)(1+x^{2p}+\dots+x^{2(n-1)p})+\frac{x^{2np}}{x^{p}-1}.
    \end{align}

    Using the appropriate representation for $g(x)$ as we expand $h(x)$ we find,
    \begin{align}
        h(x) = &\; (R(x)x^{2np+j}-R^*(x))\left(\frac{g(x)}{x^{p}-1}\right)\\
        = &\; x^{2np+j}R(x)(1+x^{2p}+\dots+x^{2(n-1)p})+\frac{R(x)x^{2np+j}}{x^{p}-1}\nonumber\\
        & +x^{p}R^*(x)(1+x^{2p}+\dots+x^{2(n-1)p})-\frac{R^*(x)x^{2np}}{x^{p}-1}.
    \end{align}

    Upon substitution of $h(x)$ back into formula \ref{R(x)Neg2.2} for $R_{2np+j}(x)$ we find.

    \begin{align}
        R_{2np+j}(x) = &\; x^{2np+j+p}R(x)(1+x^{2p}+\dots+x^{2(n-1)p})+R^*(x)(1+x^{2p}+\dots+x^{2(n-1)p})\nonumber\\
        & -x^{2np+j}R(x)(1+x^{2p}+\dots+x^{2(n-1)p})-x^{p}R^*(x)(1+x^{2p}+\dots+x^{2(n-1)p})\nonumber\\
        & +x^{2np+j}R(x)(1+x^{2p}+\dots+x^{2(n-1)p})+ x^{p}R^*(x)(1+x^{2p}+\dots+x^{2(n-1)p})\nonumber\\
        & + \frac{R(x)x^{2np+j}-R^*(x)x^{2np}}{x^{p}-1}\\
        = &\; x^{2np+j+p}R(x)(1+x^{2p}+\dots+x^{2(n-1)p})+R^*(x)(1+x^{2p}+\dots+x^{2(n-1)p})\nonumber\\
        & + \frac{R(x)x^{2np+j}-R^*(x)x^{2np}}{x^{p}-1}.
    \end{align}

    We see, as $n$ increases, the repeated copies of $R(x)$ and $R^*(x)$ are the leading and trailing coefficients of our polynomial as expected. The middle terms of the expansion, dictated by $B(x)=(R(x)x^{j}-R^*(x))/(x^{p}-1)$, are independent of $n$. We notice in particular that $B(x)$ is symmetric with leading coefficient $1$. This is the reason we see $a_{2p}-1$ appearing in the expansion and not simply $a_{2p}$.

    We now prove $R_{2np+j}(x)\in \mathbb{Z}[x]$. It suffices to show $(R(x)x^{2np+j}-R^*(x)x^{2np})/(x^{p}-1)\in\mathbb{Z}[x]$. Under our assumption we have $(R(x)x^{2p+j}-R^*(x)x^{2p})/(x^{p}-1)\in\mathbb{Z}[x]$. Hence $(R(x)x^j-R^*(x))/(x^{p}-1)\in\mathbb{Z}[x]$, but $(R(x)x^{2np+j}-R^*(x)x^{2np})/(x^{p}-1)=x^{2np}(R(x)x^j-R^*(x))/(x^{p}-1)$ and so it must be an integer polynomial.

    Therefore $\mathbf{a_{(n,j)}}$ is a valid $\beta$-expansion for $1$ under base $\alpha_{2np+j}^-$, it remains to show it is the greedy expansion. This fact follows because the Pisot expansion, $a_1a_2\dots a_{2p}$, is reversibly greedy and we assume $\mathbf{a_{(1,j)}}$ is the greedy expansion for $\alpha_{2p+j}^-$ as in the proof of Theorem \ref{MAIN}. 
\end{proof}

\begin{rmk}
  Recall that if the greedy representation is given by $a_1\dots a_k$ then the quasi-greedy expansion is given by $a_1\dots a_{k-1}(a_k-1)(a_1\dots a_{k-1}(a_k-1))^{\omega}$. We see that by setting $\ell=k$, this expansion is exactly in the form required by Theorem \ref{MAIN NEG INF}. As such we see that the finite case is a consequence of the periodic case where we instead use the quasi-greedy expansion, this is proper as we know that a finite greedy expansion is reversibly greedy only if its quasi counterpart is as well, by Proposition \ref{quasi}.  
\end{rmk}

Now consider the other type of expansion seen in Example \ref{negative exmp}, the case $m=4n+3$ in that example.

\begin{theorem}
    Let $q$ be a Pisot number with minimal polynomial $M(x)$. Furthermore, let $q$ have a finite reversibly greedy $\beta$-expansion, say $a_1a_2\dots a_k$ with no restriction on $k$. Assume that the companion polynomial of $q$ has co-factor polynomial $Q(x)$, which is reciprocal. Let $\alpha_{nk+j}^-$ be the Salem number that satisfies the polynomial $M(x)x^{nk+j}-M^*(x)$ for some $j\geq 1$. Let $\kappa=a_2\dots a_{k-1}(a_k-1)$ and $\gamma=a_1\dots a_{k-1}(a_k-1)$.
    
    Suppose that the $\beta$-expansion for 1 in base $\alpha_{k+j}^-$ is given by
    \begin{equation}
        \mathbf{a_{(1,j)}}=a_1(a_2\dots a_{k-1}(a_k-1)\lambda_j00)^{\omega}=1(\kappa\lambda_j00)^{\omega},
    \end{equation}
    for some string $\lambda_j$ of length $j-2$.

    Then for $n\geq 1$ the $\beta$-expansion for 1 in base $\alpha_{nk+j}^-$ is given by
    \begin{align}
        \mathbf{a_{(n,j)}}&=a_1(a_2\dots a_{k-1}(a_k-1)(a_1\dots a_{k-1}(a_k-1))^{n-1}\lambda_j00)^{\omega}\\
        &=1(\kappa\gamma^{n-1}\lambda_j00)^{\omega}.
    \end{align}
    
    \label{MAIN NEG2}
\end{theorem}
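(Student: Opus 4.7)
The plan is to mirror the blueprint of Theorem \ref{MAIN NEG INF2}, taking the quasi-greedy period length $k$ as the divisor. Define
$$R_{nk+j}(x) = \frac{Q(x)\bigl(M(x)x^{nk+j} - M^*(x)\bigr)}{x^k - 1},$$
and write $R(x) = Q(x)M(x) = P_k(x)$ for the companion polynomial of the finite greedy expansion of $q$, so that $R^*(x) = Q(x)M^*(x)$ by reciprocality of $Q$. Using the telescoping identity $x^{nk} - 1 = (x^k - 1)(1 + x^k + \cdots + x^{(n-1)k})$, one obtains the decomposition
$$R_{nk+j}(x) = R(x)\,x^j\bigl(1 + x^k + \cdots + x^{(n-1)k}\bigr) + A(x), \qquad A(x) = \frac{R(x)x^j - R^*(x)}{x^k - 1}.$$
Integrality of $A(x)$ is immediate: the $n = 1$ hypothesis gives $R_{k+j}(x) = R(x)x^j + A(x) \in \mathbb{Z}[x]$, and since $R(x)x^j \in \mathbb{Z}[x]$ we conclude $A(x) \in \mathbb{Z}[x]$, hence $R_{nk+j}(x) \in \mathbb{Z}[x]$ for every $n$.

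Next I would verify that $R_{nk+j}(x)$ is precisely the companion polynomial of $\mathbf{a_{(n,j)}} = 1(\kappa\gamma^{n-1}\lambda_j 00)^\omega$. Each copy $R(x)\,x^{j+ik}$ for $0 \leq i \leq n-1$ contributes the coefficient pattern $1, -a_1, \ldots, -a_{k-1}, -a_k$ at degrees $k + j + ik$ down to $j + ik$. At every interior overlap the leading $1$ of one copy meets the trailing $-a_k$ of its predecessor, producing the corrected coefficient $-(a_k - 1)$ and thereby placing the digit $a_k - 1$ that closes each intermediate $\gamma$-block; since $1\kappa = a_1 a_2 \cdots a_{k-1}(a_k - 1) = \gamma$, these blocks stitch together into $\gamma^n$. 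At the bottom, the trailing $-a_k x^j$ of the lowest copy meets the leading $x^j$ of $A(x)$ (whose leading coefficient is $1$, since $\deg R(x)x^j - \deg R^*(x) = j$), producing the final $(a_k - 1)$ that terminates the last $\gamma$; the remaining $j$ coefficients of $A(x)$ then reproduce $\lambda_j 00$ by the $n=1$ hypothesis. I expect this coefficient bookkeeping at the overlaps, and especially at the boundary with $A(x)$, to be the principal technical obstacle.

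Finally, the divisibility step runs exactly as in Theorem \ref{MAIN NEG INF2}: the minimal polynomial $m(x)$ of $\alpha_{nk+j}^-$ divides $M(x)x^{nk+j} - M^*(x)$ and is coprime to $x^k - 1$ (the unimodular conjugates of a Salem number are not roots of unity), so $m(x) \mid R_{nk+j}(x)$. Hence $\mathbf{a_{(n,j)}}$ is a valid $\alpha_{nk+j}^-$-expansion of $1$, and greediness follows from Theorem \ref{greedy} in the same way as in Theorems \ref{MAIN NEG INF} and \ref{MAIN NEG INF2}: shifts of $\mathbf{a_{(n,j)}}$ landing inside the initial $1\kappa\gamma^{n-1}$-block are controlled by the reversibly greedy property of $a_1\ldots a_k$, while shifts landing in $\lambda_j 00$ are controlled by the $n=1$ hypothesis on $\mathbf{a_{(1,j)}}$. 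Once the overlap coefficient identification in the middle paragraph is settled, the remainder of the proof is a direct adaptation of the periodic case.
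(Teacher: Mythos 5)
Your proposal follows essentially the same route as the paper's proof: the same polynomial $R_{nk+j}(x)=Q(x)(M(x)x^{nk+j}-M^*(x))/(x^k-1)$, the same telescoping decomposition $R(x)x^j(1+x^k+\cdots+x^{(n-1)k})+A(x)$ with integrality of $A(x)$ deduced from the $n=1$ hypothesis, and the same appeal to Theorem \ref{greedy} for greediness, with your overlap bookkeeping simply making explicit what the paper asserts about successive copies of $R(x)$ corresponding to copies of $\gamma$. The only slight difference is that you invoke the reversibly greedy property for shifts inside the $\gamma$-blocks, whereas the paper observes that here (unlike the other cases, since $\gamma^*$ never appears) greediness of $a_1\dots a_k$ together with the $n=1$ hypothesis already suffices.
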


\begin{proof}
    Consider the polynomial,
    \begin{equation}
        R_{nk+j}(x)=Q(x)(M(x)x^{nk+j}-M^*(x))/(x^k-1).
    \end{equation}
    We show that if $R_{k+j}(x)$ is the companion polynomial to the $\beta$-expansion of $\alpha_{k+j}$ then $R_{nk+j}(x)$ must be the companion polynomial for the $\beta$-expansion of $\alpha_{nk+j}$. Notice

    \begin{align}
        R_{nk+j}(x) &= (R(x)x^{nk+j}-R^*(x))/(x^k-1)\\
        &=\frac{R(x)x^j(x^{nk}-1)}{x^k-1}+\frac{R(x)x^j-R^*(x)}{x^k-1}\\
        &= R(x)x^j(1+x^k+\cdots+x^{(n-1)k})+R_{j}(x).
    \end{align}

    We see that under our assumption $R_{nk+j}(x)$ is indeed a polynomial as $R_{j}(x)=R_{k+j}(x)-R(x)x^j$ is a polynomial by assumption. This is indeed the companion polynomial of the desired expansion. This is because the successive powers of $R(x)$ in the companion polynomial correspond exactly to successive copies of $\gamma$ in the expansion it represents. The highest power representing $\kappa$ is an artifact of the periodic nature of the expansion.

    When applying Theorem \ref{greedy} we see that this is indeed a greedy expansion for 1 since $a_1a_2\dots a_k$ is greedy and we have $a_1a_2\dots a_k > \lambda_j$ by assumption.

\end{proof}

\begin{rmk}
    It appears this theorem  does not require the condition of being reversibly greedy. In the previous theorems the only part of the proof which required the Pisot expansion to be reversibly greedy is in using Theorem \ref{greedy} to prove that our expansion were themselves greedy. As the expansion does now include the inverted Pisot expansion it appears that revversibly greedy is not required. We are yet to find any examples that support this idea. One reason for this may be the requirement of having reciprocal co-factor.
\end{rmk}

\begin{rmk}
    As was the case in the previous theorem, we can view this finite case as a special case of the periodic case, Theorem \ref{MAIN NEG INF2}. We know that the quasi-greedy expansion is given by $a_1\dots a_{k-1}(a_k-1)(a_1\dots a_{k-1}(a_k-1))^{\omega}$. We can view this instead as the fully periodic expansion $(a_1\dots a_{k-1}(a_k-1))^{\omega}$. In this case, we have $\kappa$ being empty and $\gamma= a_1\dots a_{k-1}(a_k-1)$. Applying Theorem \ref{MAIN NEG INF2} our result follows directly. This is valid, as the quasi-greedy expansion is reversibly greedy whenever the finite greedy expansion is as well.
\end{rmk}

\begin{rmk}
    As in the periodic case, at first glance it appears that an expansion of the form seen in Theorem \ref{MAIN NEG2} can be viewed as in Theorem \ref{MAIN NEG} when we consider periods of longer length. We attempt this by appending a factor of $(x^{nk+|\lambda_j|+1}+1)$ to the companion polynomial. If we were able to apply Theorem \ref{MAIN NEG} then this factor would need to equal $(x^{nk+k/2}+1)$. Since $|\lambda_j|=j-2$ we would require $j=k/2+1$, as was the case in Example 3.5, which is not guaranteed.
\end{rmk}

\section{Examples and Motivation}

\subsection{Structure of Pisot Numbers}
Now that we have shown these results, we aim to show that the conditions of the theorems are relatively common occurrences. We will demonstrate this by finding a number of infinite families with these properties. We will first discuss the structure of Pisot numbers; we begin by considering the limit points of the Pisot numbers smaller than 2. The following result is due to Amara \cite{amara}.

\begin{theorem}
    The limit points of the Pisot numbers in $(1,2)$ are the following:
    \begin{equation}
        \varphi_1=\psi_1<\varphi_2<\psi_2<\varphi_3<\chi<\psi_3<\varphi_4<\cdots <\psi_r<\varphi_{r+1}<\cdots <2.
        \label{Limit Pisot}
    \end{equation}
    The minimal polynomial of $\varphi_r$ is $\Phi_r(x)=x^{r+1}-2x^r+x-1$, the minimal polynomial of $\psi_r$ is $\Psi_r(x)=x^{r+1}-x^r-\cdots -x-1$, and the minimal polynomial of $\chi$ is $\chi(x)=x^4-x^3-2x^2+1$.
\end{theorem}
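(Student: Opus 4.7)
The plan is to prove Amara's classification in three stages: first exhibit explicit sequences of Pisot numbers converging to each claimed limit, then verify the stated interleaving ordering, and finally argue that no other limit points exist in $(1,2)$.

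For the first stage, I would construct Pisot-valued perturbations of each of the three polynomial families. For $\psi_r$, consider the polynomials $\Psi_r(x) x^n + \Psi_r^*(x)$ and $\Psi_r(x) x^n - \Psi_r^*(x)$ for large $n$; for suitable sign choice these have a real root in $(1,2)$ that is Pisot rather than Salem (the other conjugates sit strictly inside the unit disk). A Rouch\'e-type argument on a circle of radius close to $\psi_r$ shows that the perturbation $\pm x^{-n} \Psi_r^*(x)$ is uniformly small once $n$ is large, so the dominant root of the perturbed polynomial lies close to $\psi_r$ and converges to it as $n\to\infty$. Analogous constructions work for $\varphi_r$ with $\Phi_r$ and for $\chi$ with $\chi(x)$; using both the $+$ and $-$ variants exhibits each value as a two-sided limit, which is needed for it to appear as a non-isolated accumulation point.

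For the second stage, I would verify the interleaving in (\ref{Limit Pisot}) by direct analysis of the polynomials. Observe that $\Phi_1(x) = \Psi_1(x) = x^2 - x - 1$, so $\varphi_1 = \psi_1$ is the Golden Ratio. For $r \geq 2$ one evaluates $\Phi_r$ at $\psi_r$ and $\Psi_{r}$ at $\varphi_{r+1}$; extracting sign changes yields the claimed strict inequalities. Both sequences tend to $2$ as $r\to\infty$: a short calculation gives $\Phi_r(2) = 1$ and $\Psi_r(2) = 1$, so each polynomial has its largest real root strictly below $2$, while the root must approach $2$ because the polynomials blow up elsewhere. The placement of $\chi$ between $\varphi_3$ and $\psi_3$ reduces to evaluating $\chi(\varphi_3)$ and $\chi(\psi_3)$ and observing a sign change.

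The third stage, completeness, is the main obstacle and requires the Dufresnoy-Pisot theory of small Pisot polynomials. The key input is that any limit $\theta$ of Pisot numbers in $(1,2)$ is a root of a polynomial of the form $P(x) \pm x^n P^*(x)$ with $P$ itself a Pisot polynomial, leading to a recursive classification via a ``derivation'' operation on Pisot polynomials whose iterates tend to polynomials with roots on the unit circle. Starting from the finitely many Pisot polynomials with dominant root in $(1,2)$ and bounded degree, one tracks this derivation and shows that the only limit polynomials produced are the $\Phi_r$, $\Psi_r$, and $\chi$ above. The necessary finiteness at each stage comes from compactness in the space of polynomials with bounded coefficients, but the combinatorial enumeration required to exhaust all derivation chains and rule out any further accumulation point is delicate; this is where Amara's original proof carries essentially all of its technical weight, and where my plan would devote the bulk of the work.
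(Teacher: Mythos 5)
The paper does not prove this statement at all: it is quoted verbatim as Amara's classification \cite{amara}, so the only ``proof'' in the paper is the citation. Judged on its own merits, your proposal has two genuine problems. First, your stage-one construction is wrong: the polynomials $\Psi_r(x)x^n \pm \Psi_r^*(x)$ (and the analogues for $\Phi_r$ and $\chi$) are reciprocal, respectively anti-reciprocal, since $x^{n+\deg \Psi_r}\bigl(\Psi_r(1/x)x^{-n}+\Psi_r^*(1/x)\bigr)=\Psi_r(x)x^n+\Psi_r^*(x)$; hence their roots off the unit circle occur in pairs $\alpha,1/\alpha$, and the root in $(1,2)$ is a \emph{Salem} number, not a Pisot number --- this is precisely the construction the present paper uses to manufacture Salem numbers $\alpha_m^{\pm}$, so your parenthetical claim that ``the other conjugates sit strictly inside the unit disk'' fails. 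To exhibit each $\varphi_r$, $\psi_r$, $\chi$ as a two-sided limit of Pisot numbers you need the regular Pisot families of Table \ref{Regular Pisot}, e.g.\ $\Phi_r(x)x^q\pm(x^r-x^{r-1}+1)$ and $\Psi_r(x)x^q\pm(x^{r+1}-1)$, whose spurious factors are cyclotomic and whose dominant roots are genuinely Pisot.

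Second, and more seriously, your third stage --- completeness, i.e.\ that \emph{no other} limit points exist in $(1,2)$ --- is the entire content of Amara's theorem, and your plan does not prove it: it names the Dufresnoy--Pisot derivation machinery, asserts that a compactness argument gives finiteness at each stage, and then explicitly defers the exhaustive enumeration to ``where Amara's original proof carries essentially all of its technical weight.'' As written this is an outline of someone else's argument, not an argument; the key step (that every accumulation point of Pisot numbers below $2$ arises from one of the three listed families, and that the derivation chains can be exhausted) is neither carried out nor reduced to a precise citable lemma. Your stage-two interleaving checks (e.g.\ $\Phi_1=\Psi_1=x^2-x-1$, $\Phi_r(2)=\Psi_r(2)=1$, sign changes at the neighbouring roots) are fine, but with stage one miscast and stage three missing, the proposal does not establish the theorem; the honest alternatives are to repair stage one as above and cite Amara for completeness, exactly as the paper does.
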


The families of Pisot numbers that approach these limit points are known and are called regular Pisot numbers, and are given in Table \ref{Regular Pisot}. These polynomials are not generally irreducible; they do however admit a single root greater than 1, which is the regular Pisot number. The remaining factors of these polynomials are always cyclotomic.

Pisot numbers that are not regular are called irregular. It is known that for each Pisot number in Equation \ref{Limit Pisot} there exists $\varepsilon>0$ such that all Pisot numbers within $\varepsilon$ of this limit point are regular Pisot numbers. Further, it is known that only finitely many irregular Pisot numbers exist in the interval $[1,2-\epsilon]$ for any $\epsilon>0$, \cite{talmoudi}.

\begin{table}[]
    \centering
    \begin{tabular}{l|l}
        \hline
        Limit Points & Defining Polynomials \\ \hline
        $\varphi_r$ & $\Phi_{(r,q)}^{A\pm}(x)=\Phi_r(x)x^{q}\pm (x^r-x^{r-1}+1)$\\
        & $\Phi_{(r,q)}^{B\pm}(x)=\Phi_r(x)x^{q}\pm (x^r-x^+1)$\\
        & $\Phi_{(r,q)}^{C\pm}(x)=\Phi_r(x)x^{q}\pm (x^r+1)(x-1)$ \\
        $\psi_r$ & $\Psi_{(r,q)}^{A\pm}(x)=\Psi_r(x)x^{q}\pm (x^{r+1}-1)$ \\
        & $\Psi_{(r,q)}^{B\pm}(x)=\Psi_r(x)x^{q}\pm (x^r-1)/(x-1)$\\
        $\chi$ & $\chi_A^{\pm}(x)=\chi(x)x^{q}\pm (x^3+x^2-x-1))$ \\
        & $\chi_B^{\pm}(x)=\chi(x)x^{q}\pm (x^4-x^2+1))$
    \end{tabular}
    \caption{Regular Pisot Polynomials}
    \label{Regular Pisot}
\end{table}

Table \ref{PisotRep} contains the greedy $\beta$-expansions for most of the limit points and regular Pisot numbers. It also indicates whether they satisfy the reversibly greedy condition and gives the \textbf{pseudo-co-factor} associated with the given expansion. Recall that the co-factor of a representation is the polynomial $Q(x)$ such that $Q(x)M(x)=R(x)$, where $M(x)$ is the minimal polynomial for the base and $R(x)$ is the companion polynomial. The pseudo-co-factor is the polynomial $Q^*(x)$ such that the companion polynomial $R(x)$ can be written as $R(x)=M^*(x)Q^*(x)$, where $M^*(x)$ is the defining polynomials given from Table \ref{Regular Pisot} of the previous theorem, which may not be minimal. 

We see that most of these families have finite $\beta$-expansions. In fact they are finite reversibly greedy and hence covered by the previous result of Hare and Tweedle \cite{hareTweedle} in the positive case. The remaining six families have infinite beta-expansions and all have sub-families which are reversibly greedy. We will show a number of infinite sub-families that satisfy the conditions from our results.

\begin{sidewaystable}
    \fontsize{8pt}{8pt}
    \centering
    \begin{tabular}{l|l|l|l|l}
        Polynomial & Expansion & Restriction & Property & Pseudo Co-factor \\ \hline
        $\Phi_r(x)$ & $1^r0^{r-1}1$ & $r\geq 1$ &  Finite Reversibly Greedy & $(x^r-1)/(x-1)$\\ \hline
        $\Psi_r(x)$ & $1^{r+1}$ & $r\geq 1$ & Finite Reversibly Greedy & $1$ \\ \hline
        $\chi(x)$ & $11(10)^{\omega}$ & NA & Reversibly Greeedy & $1$ \\ \hline
        $\Phi_{(r,q)}^{A-}(x)$ & $1^r0^{r-1}10^{q-2r}10^r1^{r-1}$ & $q\geq2r$ & Finite Reversibly Greedy & $(x^r-1)/(x-1)$ \\
        $\Phi_{(r,q)}^{B-}(x)$ & $1^r0^{r-1}10^{q-2r}1^{r-1}0^r1$ & $q\geq2r$ & Finite Reversibly Greedy & $(x^r-1)/(x-1)$ \\
        $\Phi_{(r,q)}^{C-}(x)$ & $1^r0^{r-1}1(0^{q-2r}1^r0^r)^{\omega}$ & $q\geq2r+1$ & Reversibly Greedy & $(x^r-1)/(x-1)$ \\ \hline
        $\Phi_{(r,q)}^{A+}(x)$ & $(1^r0^r)^s1^{r-1}$ & $q=2rs+r-1$ & Finite Reversibly Greedy & $1/(x^r+1)(x-1)$ \\
        $\Phi_{(r,q)}^{B+}(x)$ & $(1^r0^r)^s1$ & $q=2rs+1$ & Finite Reversibly Greedy & $1/(x^r+1)(x-1)$ \\
        $\Phi_{(r,q)}^{C+}(x)$ & $(1^r0^r)^{s-1}0^{q-1}1$ & $q=2rs$ & Finite Reversibly Greedy & $(x^{q}-1)/(x^r+1)(x-1)$ \\ \hline
        $\Psi_{(r,q)}^{A-}(x)$ & $1^{r+1}(0^{q-r-1}1^r0)^{\omega}$ & $q\geq r$ & Reversibly Greedy & $1$\\
        $\Psi_{(r,q)}^{B-}(x)$ & $1^{r+1}0^{q-r}1^r$ & $q\geq r-1$ & Finite Reversibly Greedy & $1$ \\ \hline
        $\Psi_{(r,q)}^{A+}(x)$ & $(1^r0)^s0^{q-1}1$ & $q=(r+1)s$ & Finite Reversibly Greedy & $(1+x^{r+1}+\cdots+x^{(r+1)(s-1)})$ \\
        $\Psi_{(r,q)}^{B+}(x)$ & $(1^r0)^{s-1}1^r$ & $q (r+1)s+r$ & Finite Reversibly Greedy & $1/(x^{r+1}-1)$ \\ \hline
        $\chi_A^-(x)$ & $11(10)^{s-2}11011((10)^{s-2}0111(01)^{s-2}1000)^{\omega}$ & $q = 2s$ for $s\geq 2$ & Reversibly Greedy & $(x^{q}-1)(x^{q+1}+1)/(x^2-1)$ \\
        $\chi_A^-(x)$ & $11(10)^{s-2}11(00011(10)^{s-2}00)^{\omega}$ & $q = 2s+1$ for $s\geq 2$ & Reversibly Greedy & $(x^{q}-1)/(x^2-1)$ \\
        $\chi_A^+(x)$ & $11(10)^{s-2}011100(10)^{s-3}000010(00)^{s-2}11$ & $q = 2s$ for $s\geq 3$ & Finite Reversibly Greedy & $(x^{q-2}+1)(x^{q+2}-1)/(x^2-1)$ \\
        $\chi_A^+(x)$ & $11(10)^{s-1}01000(10)^{s-1}0(00)^s11$ & $q = 2s+1$ for $s\geq 1$ & Finite Reversibly Greedy & $(x^{q}+1)(x^{q+1}-1)/(x^2-1)$ \\ \hline
        $\chi_B^-(x)$ & $11(10)^{s-3}1100000(10)^{s-3}001$ & $q=2s$ for $s\geq 3$ & Finite Reversibly Greedy & $(x^{q-2}-1)/(x^2-1)$ \\
        $\chi_B^-(x)$ & $11(10)^{s-2}1101000(10)^{s-3}011(1(00)^{s-1}10)^{\omega}$ & $q=2s+1$ for $s\geq 3$ & Reversibly Greedy & $(x^{2q-2}-x^{q-1}-x^{q-3}+1)/(x^2-1)$ \\
        $\chi_B^+(x)$ & $11(10)^{s-2}0101(1(10)^{s-3}(011)^2(10)^{s-3}010^4100)^{\omega}$ & $q=2s$ for $s\geq 3$ & Reversibly Greedy & $(x^4(x^{q-1}+1)(x^2-x+1)+x^{2q+4}-1)/(x^2-1)$ \\
        $\chi_B^+(x)$ & $11(10)^{s-1}001$ & $q=2s+1$ for $s\geq 1$ & Finite Reversibly Greedy & $1/(x^2-1)$        
    \end{tabular}
    \caption{Expansions of Regular Pisot Numbers}
    Compiled in part from the work of Panju \cite{panju} and, Hare and Tweedle \cite{hareTweedle}.
    \label{PisotRep}
\end{sidewaystable}

\subsection{Positive Periodic Examples}
\begin{example}
    The beta-expansion for the Pisot polynomial $\Phi_{(r,\ell)}^{C-}(x)$ is given by $1^r0^{r-1}1(0^{\ell-2r}1^r0^r)^{\omega}$. We then calculate the constituent components of the companion polynomial and find the following;
    \begin{align}
        P_k(x)&=x^{2r}-x^{2r-1}-\cdots-x^r-1\\
        P^*_k(x)&=-x^{2r}-x^r-x^{r-1}-\cdots-x+1\\
        S(x)&=x^{2r-1}+x^{2r-2}+\cdots+x^r\\
        S^*(x)&=x^{\ell-1-r}+x^{\ell-1-(r+1)}+\cdots+x^{\ell-1-(2r-1)}
    \end{align}

    We then calculate the polynomial, $$Z_j(x)=(P_k(x)+S(x))x^{j+\ell n}+S(x)x^{j+\ell(n+1)}+P_k^*(x)x^{\ell (n+1)}+S^*(x)x^{\ell(n-1)+k+1}+S^*(x)x^{\ell n+k+1}.$$ We take $n=1$ which allows the coefficients of this polynomial to exactly determine the string $\tau_j$ in Theorem \ref{MAIN}.

    For example fix $r=2$ and $\ell=5$. Note that $\ell$ is the length of the periodic part of the Pisot expansion. Let $M(x)=\Phi_{(r,\ell)}^{C-}(x)$. Then we can calculate the beta-expansions for the Salem numbers with defining polynomials $M(x)x^{2\ell+j}+M^*(x)$ for each $1\leq j\leq 5$. We have
    \begin{align}
        P_k(x)&=x^4-x^3-x^2-1\\
        P^*_k(x)&=-x^4-x^2-x+1\\
        S(x)&=x^3+x^2\\
        S^*(x)&=x^2+x
    \end{align}

    We calculate the following polynomials,
    \begin{align}
        Z_1(x)&=x^{13}+2x^{10}+x^{7}\\
        Z_2(x)&=x^{15}+x^{11}+x^{10}+x^{6}\\
        Z_3(x)&=x^{16}+x^{15}-x^{14}+x^{12}+x^{10}-x^{8}+x^{7}+x^{6}\\
        Z_4(x)&=x^{17}+x^{16}-x^{14}+x^{13}+x^{10}-x^{9}+x^{7}+x^{6}\\
        Z_5(x)&=x^{18}+x^{17}+x^{7}+x^{6}
    \end{align}

    We can then use these polynomials to determine the strings $\tau_j$ for each $j$, we must be careful about leading and trailing zeroes due to the highest possible degree and the possible degree span of these polynomials.
    \begin{table}[!ht]
        \centering
        \begin{tabular}{l|l|l}
            j & $\tau_j$ & Notes\\ \hline
            1 & $00100200100$ & Invalid expansion\\
            2 & $010^3110^310$ & Valid expansion\\
            3 & $011(-1)01010(-1)110$ & Invalid expansion\\
            4 & $0110(-1)1001(-1)0110$ & Invalid expansion\\
            5 & $0110^9110$ & Valid expansion
        \end{tabular}
        \label{tab:my_label}
    \end{table}
    Here the coefficients of this polynomial for $j=1$ give the digits of a beta-expansion including a $2$ which is not an allowable digit. For the cases $j=3$ and $j=4$ we find negative coefficients which would result in negative digits in the beta-expansion. This shows us that $R_{n\ell+j}(x)$ is not the companion polynomial for the beta-expansion of the Salem number induced by the polynomial $M(x)x^{n\ell+j}+M^*(x)$ for $j=1,3,4$.
\end{example}

\subsection{Negative Periodic Examples}

\begin{example}
Consider the family of Pisot polynomials $\Phi^{C-}_{(r,q)}(x)=x^q\Phi_r(x)-(x^r+1)(x-1)$, where $\Phi_r(x)=x^{r+1}-2x^r+x-1$. The greedy $\beta$-expansion for $1$ under the Pisot root of $\Phi^{C-}_{(r,q)}(x)$ is given by $1^r0^{r-1}0(0^{q-2r}1^r0^r)^{\omega}$ for $q\geq 2r$ which is reversibly greedy. We show that for a specific value of $j$ the conditions of Theorem \ref{MAIN NEG2} are satisfied and we give the corresponding string $\lambda_j$. 

We know that the pseudo-co-factor is given by $Q^*(x)=\frac{(x^r-1)}{(x-1)}$ and we can write $R(x)=\Phi^{C-}_{(r,q)}(x)Q^*(x)$.
\begin{align}
    R(x) &=x^q\Phi^{C-}_{(r,q)}(x^r-1)/(x-1)-(x^r-1)(x^r+1)\\
         &=(x^{r+q+1}-2x^{r+q}+x^{q+1}-x^q)(x^{r-1}+\dots+x+1)-x^{2r}+1\\
         &=x^{2r+q}-x^{2r+q-1}-\dots-x^{r+1+1}-x^{r+1}-x^q-x^{2r}+1,
\end{align}
and
\begin{align}
    R^*(x) &= x^{2r+q}-x^q-x^{2r}-x^r-x^{r-1}-\dots-x+1.
\end{align}

From Theorem \ref{MAIN NEG INF2} we need to consider the function given by $A(x)=\frac{R(x)x^j-R^*(x)}{x^{\ell}-1}$. We show by computation that it is an integer polynomial and are then able to read the digits of $\lambda_j$ from its coefficients. Note that in this case we have $q=\ell$ and $k=2r$. Consider the case where $\ell\geq 3r$ and $j=\ell-r+1$.

\begin{align}
    A(x) =&\; (x^{2\ell+r+1}-x^{2\ell+r}-x^{2\ell+r-1}-\dots-x^{2\ell+1}-x^{2\ell-r+1}-x^{\ell+r+1}+x^{\ell-r+1}\nonumber\\
    &-x^{\ell+2r}+x^{\ell}+x^r+x^{r-1}+\dots+x-1)/(x^{\ell}-1)\\
    =&\; x^{\ell+r+1}-x^{\ell-r+1}-x^{2r}+1-(x^{\ell}+1)(x^r-x^{r-1}-\dots-x)\\
    =&\; x^{\ell+r+1}-x^{\ell+r}-\dots-x^{\ell+r}-x^{\ell-r+1}-x^{2r}-x^r-x^{r-1}-\dots-x+1. \label{phiC_A(x)}
\end{align}

The polynomial $-x^{2r}-x^r-x^{r-1}-\dots-x+1$ is exactly the part of the companion polynomial that corresponds to $\kappa^*00$. Note that for $\ell\geq3r$ we have $\ell-r+1\geq2r+1$ and so $A(x)$ only has coefficients in the set $\{0,\pm1\}$.

Since $\lambda_j$ has length $j-k-1=(\ell-r+1)-2r-1=\ell-3r$ the coefficients for powers of $x^{2r+i}$ for $i=1,\dots,\ell-3r$ correspond to the digits of $\lambda_j$.

We see from Equation \ref{phiC_A(x)} that these powers of $x$ all have a coefficient of $0$. We conclude that $\lambda_j=0^{\ell-3r}$, where the string is empty when $\ell=3r$. 
\end{example}

\begin{example}
We now move our focus to another family of Pisot numbers. Consider the family of polynomials $\Psi_{(r,q)}^{A-}(x)=x^q\Psi_r(x)-(x^{r+1}-1)$ where $\Psi_r(x)=x^{r+1}-x^r-\dots-x-1$. The greedy $\beta$-expansion for $1$ under the Pisot root of $\Psi_{(r,q)}^{A-}(x)$ is given by $1^{r+1}(0^{q-r-1}1^r0)^{\omega}$ for $q\geq r+1$, this is reversibly greedy.

The pseudo-co-factor is given by $Q^*(x)=1$ and so $R(x)=\Psi_{(r,q)}^{A-}(x)$. We will consider two sub-families.

First, the family $\Psi_{(1,q)}^{A-}(x)$ with $q\geq \max\{3,r+1\}$. We have 
\begin{equation}
    R(x)=x^{q+2}-x^{q+1}-x^q-x^2+1,
\end{equation}
with $\ell=q$ and $k=2$. And
\begin{equation}
    R^*(x)=x^{q+2}-x^q-x^2-x+1.
\end{equation}

We once again consider the polynomial $A(x)$ from Theorem \ref{MAIN NEG INF2} with $j=q$.
\begin{equation}
    A(x)= x^{q+2}-x^{q+1}-x^q-x^2-x+1.
\end{equation}
The length of $\lambda_j$ is $j-k-1=q-3$ which is non-negative. We can read the digits of $\lambda_j$ from the coefficients of $A(x)$. We conclude $\lambda_j=0^{q-3}$.

Second, the family $\Psi_{(1,2p)}^{A-}(x)$ with $p\geq \max\{2,(r+1)/2\}$. We have
\begin{equation}
    R(x)=x^{2p+2}-x^{2p+1}-x^{2p}-x^2+1,
\end{equation}
with $\ell=2p$ and $k=2$. And
\begin{equation}
    R^*(x)=x^{2p+2}-x^{2p}-x^2-x+1.
\end{equation}

In this case, we consider the function $B(x)=(R(x)x^j-R^*(x))/(x^p-1)$ with $j=p$ from Theorem \ref{MAIN NEG INF}.
\begin{equation}
    B(x)=x^{2p+2}-x^{2p+1}-x^{2p}-x^{p+1}-x^2-x+1.
\end{equation} This corresponds exactly to $\tau_j=0^{p-2}10^{p-2}$ which is exactly of the required length $j-k+p-1=p-2+p-1=2p-3$.

Finally,  a similar calculation shows that for $r\geq 2$ we have that $\Psi_{(r,2r)}^{A-}(x)$ satisfies the conditions of Theorem \ref{MAIN NEG INF2}. We find that for $j=2r$ that $\lambda_j=0^{r-2}$.
\end{example}

\subsection{Data}

In this section, we take a look at a data set that includes all Pisot numbers with minimal polynomial of degree at most 15. 

There are 555 Pisot numbers with a finite greedy $\beta$-expansion for 1 in the data set. 518 of which have a reversibly greedy expansion. There are 292 Pisot numbers with an eventually periodic greedy $\beta$-expansion for 1 in the data set. 254 of which have a reversibly greedy expansion. The following table summarizes the proportion of reversibly greedy Pisot numbers for which the theorems can be applied.

\begin{table}[]
    \centering
    \begin{tabular}{c|c|c}
        Case & Total Number of Pisot Numbers & Number of which satisfy a Theorem \\ \hline
        Finite Positive & 518 & 518\\
        Finite Negative & 518 & 186\\
        Periodic Positive & 254 & 163\\
        Periodic Negative & 254 & 182\\
    \end{tabular}
    \caption{Pisot Numbers for which our results can be applied}
    \label{tab:my_label}
\end{table}

Upon examining our dataset it becomes clear that there are patterns, we see some families of regular Pisot numbers that will always satisfy the conditions and other families that only sometimes satisfy the conditions. Of particular note is the Finite Negative case which appears to have no families that always satisfy the conditions, apart from the limit Pisot numbers themselves. Some of these can be seen in the following table, where each family works for large enough $r$ and $q$. Table \ref{Families of Regular Pisot Numbers} gives certain families of Pisot numbers that always satisfy the conditions of the relevant theorems, it also gives a value for $j$ that works in those cases.

\begin{table}[]
    \centering
    \begin{tabular}{c|c|c|c|c}
        Case & Family & $j$ & Family & $j$ \\ \hline
        Finite Positive & All Families & All $j$ & &\\
        Finite Negative & $\Phi_r$ & $j=r$ & $\Psi_r$ & $j=r-1$\\
        Periodic Positive & $\Phi_{(r,q)}^{C-}$ & $j=q$ & $\Psi_{(1,q)}^{A-}$ & $j=q$\\
        Periodic Negative & $\Phi_{(r,q)}^{C-}$ & $j=q-r+1$ & $\Psi_{(1,q)}^{A-}$ & $j=q$\\
    \end{tabular}
    \caption{Families of Regular Pisot Numbers}
    \label{Families of Regular Pisot Numbers}
\end{table}

\section{Further Remarks}

In this paper, we explore greedy $\beta$-expansions for Salem numbers which are related to Pisot numbers. Further work could include determining a criteria for exactly when these theorems apply. We give data-driven evidence to suggest why we expect a large proportion of Pisot numbers to have these properties. A deeper exploration of individual families of regular Pisot numbers as to when the results apply to sub-families would be interesting.

Other potential expansions of these results include considering Pisot numbers $\beta>2$, on the alphabet $\{0,1\dots,\lfloor\beta\rfloor\}$. There are a few partial results due to Hichri \cite{hichri} in the finite positive case, they explore degree 2 and 3 Pisot numbers. They also go over a specific case for larger degree Pisot numbers.

\end{document}